\newcommand{\Z}{\mathbb{Z}}
\newcommand{\C}{\mathbb{C}}
\def\End{{\rm End}}
\def\M{{\rm M}}
\newcommand{\Irr}{\mathrm{Irr}}
\newcommand{\Res}{\mathrm{Res}}
\newcommand{\Hom}{\mathrm{Hom}}
\newcommand{\Sp}{\mathrm{Sp}}
\newcommand{\SL}{\mathrm{SL}}
\newcommand{\GL}{\mathrm{GL}}
\newcommand{\Mp}{\mathrm{Mp}}
\newcommand{\Oo}{\mathrm{O}}
\newcommand{\SO}{\mathrm{SO}}
\newcommand{\GSO}{\mathrm{GSO}}
\newcommand{\GSp}{\mathrm{GSp}}
\newcommand{\GSpin}{\mathrm{GSpin}}
\newcommand{\PGSp}{\mathrm{PGSp}}
\newcommand{\GO}{\mathrm{GO} }
\newcommand{\SU}{\mathrm{SU}}
\newcommand{\PGL}{\mathrm{PGL}}
\newcommand{\iif}{&\text{if }}
\newcommand{\other}{&\text{otherwise}}
\newtheorem{thm}{Theorem}[section]
\newtheorem{lem}[thm]{Lemma}
\newtheorem{coro}[thm]{Corollary}
\theoremstyle{remark}
\newtheorem{rem}[thm]{Remark}
\theoremstyle{definition}
\numberwithin{equation}{section}
\def\iddots{\mathinner{\mkern1mu\raise\p@
		\hbox{.}\mkern2mu\raise4\p@\hbox{.}\mkern2mu
		\raise7\p@\vbox{\kern7\p@\hbox{.}}\mkern1mu}}
\def\adots{\mathinner{\mkern2mu\raise\p@\hbox{.}
		\mkern2mu\raise4\p@\hbox{.}\mkern1mu
		\raise7\p@\vbox{\kern7\p@\hbox{.}}\mkern1mu}}
\title{Some applications of theta correspondence to branching laws }
\author{Hengfei Lu}
\date{}
\address{Department of Mathematics, Weizmann Institute of Science, 234 Herzl St. P.O.B.26, Rehovot 7610001, Israel}
\email{hengfei.lu@weizmann.ac.il}
\begin{document}
	\maketitle
	\begin{abstract}
		In this paper, we will use the local theta correspondences for the dual pair $(\Sp(W),\Oo(V))$ to investigate some branching law problems. 
	\end{abstract}
\subsection*{Key words} the local theta lift, branching laws, disctinction problems
\subsection*{MSC(2000)} 11F27$\cdot$11F70$\cdot$22E50
	\tableofcontents
	\section{Introduction}
	Let $F$ be a nonarchimedean local field of characteristic zero. Let $G$ be a reductive group defined over $F$. Assume that $H$ is a closed subgroup of $G$ defined over $F$. Given an irreducible smooth representation $\pi$ of $G(F)$, there is a rich literature studying the restricted representation $\pi|_{H}$, called the branching law problems, such as \cite{GGP1,gan2018period,PD,GS,wd2012so,SV2017} and so on. If $$\dim\Hom_{H(F)}(\pi,\tau)\leq1$$ for any irreducible smooth representation $\pi$ (resp. $\tau$) of $G(F)$ (resp. $H(F)$), then the pair $(G(F),H(F))$ is called multiplicity-free, such as $(\SO_n,\SO_{n-1})$. If $\dim\Hom_{H(F)}(\pi,\mathbb{C}) \leq1$ for any irreducible smooth representation $\pi$ of $G(F)$, then the pair $(G(F),H(F))$ is called the Gelfand pair.
	  If $$\Hom_{H(F)}(\pi,\mathbb{C})\neq0,$$ then $\pi$ is called $H(F)$-distinguished.
	 The relative trace formula is a very powerful tool that can be applied to deal with the disctinction problems, such as \cite{wd2012so}. In this paper, we will use the local theta correspondence to deal with the disctinction problems, following \cite{GS,gan2018period}.
	
	Let $D$ be a $4$-dimensional division quaternion algebra over $F$. Let $E=F[\delta]$ be a quadratic field extension of $F$. Let $W_F$ (resp. $W_E$) be the Weil group of $F$ (resp. $E$). Let $WD_F$ (resp. $WD_E$) be the Weil-Deligne group of $F$ (resp. $E$). Fix a nontrivial additive character $\psi$ of $F$ and $\psi_E=\psi\circ tr_{E/F}$. Set $$\psi_0(e)=\psi_E(\delta e)=\psi(tr_{E/F}(\delta e)).$$ Then $\psi_0|_{F}$ is trivial. 
	
	There are $3$ aspects in this paper:
	\begin{itemize}
			\item to show that $(\GL_1(D)\times\GL_1(D),\GSpin_{4}^{1,1}(F))$ is multiplicity-free (see \S\ref{sec:GSpin});
		\item to compare the distinction problems for $(\Sp_{2n}(E),\Sp_{2n}(F))$ and $(\Oo(V\otimes_F E),\Oo(V) )$(see \S\ref{sec:SpO}) ;
		\item to classify all the tempered representations of $\SO(V)$, distinguished by $\SO(V')$, where $V'$ is a subspace of $V$ of codimension $1$.
	\end{itemize}
Let $\GSpin_{4}^{1,1}(F)=\{(g_1,g_2)\in\GL_1(D)\times\GL_1(D):\det(g_1)=\deg(g_2) \}$, where $\det(g_1)=N_{D/F}(g_1)$. 
	\begin{thm}\label{thmGSpin}
		Given an irreducible  representation $\pi=(\pi_1,\pi_2)$ of $\GL_1(D)\times\GL_1(D)$ and any irreducible representation $\tau$ of $\GSpin_{4}^{1,1}(F)$, one has
		\[\dim\Hom_{\GSpin_{4}^{1,1}(F)}(\pi,\tau)\leq1.  \]
	\end{thm}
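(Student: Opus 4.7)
The plan is to apply Clifford--Mackey theory to the short exact sequence
\[
1 \to H \to G \xrightarrow{\phi} F^\times \to 1, \qquad \phi(g_1,g_2) = N_{D/F}(g_1)/N_{D/F}(g_2),
\]
where $G = \GL_1(D)\times\GL_1(D)$ and $H = \GSpin_4^{1,1}(F)$. Any character $\chi$ of $F^\times$ pulls back to a character $\eta_\chi(g_1,g_2) := \chi(N_{D/F}(g_1)/N_{D/F}(g_2))$ of $G$ that is trivial on $H$, so $(\pi\otimes\eta_\chi)|_H = \pi|_H$ for every smooth $\pi$; this is the source of the potential multiplicity and tells us the object to analyze is the restriction to $H$ of an entire $F^\times$-family of twists of $\pi$.

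First, I would show that the infinite abelian quotient $G/H \cong F^\times$ acts on $\Irr(H)$ through the finite $2$-torsion group $F^\times/F^{\times 2}$. Write $Z = F^\times \times F^\times$ for the center of $G$. Using that $[D^\times,D^\times] = D^1$ has centralizer $F^\times$ in $D^\times$, together with the surjectivity of $N_{D/F}$ (since $D$ is division), one checks that the centralizer of $H$ in $G$ equals $Z$. Hence $\Ad((g_1,1))$ is an inner automorphism of $H$ if and only if $N_{D/F}(g_1) \in F^{\times 2}$, and every $\tau \in \Irr(H)$ has $G$-stabilizer $G_\tau \supseteq HZ$ of finite index in $G$.

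For an irreducible $\pi = \pi_1 \boxtimes \pi_2$, Clifford theory for the finite-index subgroup $HZ \subset G$ then produces a decomposition
\[
\pi|_{HZ} = \bigoplus_{i} m \cdot \pi'_i,
\]
in which $\{\pi'_i\}$ is a single $G$-orbit of $HZ$-irreducibles and $m \ge 1$ is their common multiplicity. Each $\pi'_i$ has the same $Z$-central character $(\omega_{\pi_1}, \omega_{\pi_2})$; and two $HZ$-irreducibles with the same restriction to $H$ differ by a character of $Z$ trivial on $Z\cap H$, which their common $Z$-central character forces to be trivial. Hence the map $\pi'_i \mapsto \pi'_i|_H$ is injective, and $\dim\Hom_H(\pi,\tau) \le m$ for every $\tau \in \Irr(H)$.

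The remaining and hardest step is to show $m = 1$, equivalently, that the projective $2$-cocycle on the finite abelian $2$-group $G_{\pi'_0}/HZ \subseteq F^\times/F^{\times 2}$ associated to the self-intertwiners of $\pi'_0$ is a coboundary. I expect this to be handled through the theta correspondence for the dual pair $(\GSp_2, \GO(D)) \cong (\GL_2, \GO_4^{1,1})$: under Howe duality, $\pi_1\boxtimes\pi_2$ is theta-linked to an irreducible $\Pi$ of $\GL_2(F)$ whose self-twists (and hence the required intertwiners) are controlled by its Langlands parameter, and transporting these back to $G$ via the Weil representation should exhibit a genuine (non-projective) extension of $\pi'_0$ to $G_{\pi'_0}$ and give $m=1$. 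This is the technical heart of the argument and is where the theta-correspondence framework advertised by the paper enters.
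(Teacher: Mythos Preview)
Your Clifford--Mackey setup is correct and in fact tracks the paper's second proof (due to Prasad) rather closely: reducing modulo the center, identifying $G/HZ$ with $F^\times/F^{\times 2}$, and recognizing that the only obstruction to multiplicity one is the possible nontriviality of a $2$-cocycle on a subgroup of $(\Z/2)^r$. Your argument that the $\pi'_i$ restrict irreducibly and injectively to $H$ (because $Z$ acts by the fixed central character) is fine, so the reduction to ``$m=1$'' is valid.

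The gap is precisely the step you flag as ``the technical heart'': you do not actually show the cocycle is a coboundary. Your proposed mechanism---lifting $\pi_1\boxtimes\pi_2$ to an irreducible of $\GL_2(F)$ via theta and transporting intertwiners back---is only a sketch, and as stated it is slightly off: a representation of $\GSO_{4,0}(F)$ is in the image of the theta lift from $\GL_2(F)$ only when $\pi_2\cong\pi_1^\vee$ up to a character twist, which is exactly the hard case $|\Pi_\phi(\GSpin_4)|=4$ (the other cases already have $m=1$ by size considerations, as the paper's first proof notes). So you have correctly isolated the critical case but not resolved it.

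For comparison, the paper gives two ways to close this gap. The first proof observes that in the critical case $\pi$ descends to $\GSO_{4,0}(F)$ and then invokes the theorem of Adler--Prasad that restriction from $\GSO(n)$ to $\SO(n)$ is multiplicity-free; the theta lift from $\GL_2$ is used only to identify $\pi$ as such a descent and to enumerate the four constituents. The second proof (Prasad's) stays inside your Clifford framework but \emph{computes} the cocycle: one shows $\End_{\SL_1(D)}(\pi_1)\cong \M_2(\C)$ carries a projective $F^\times$-action realized by the $2$-dimensional irreducible $V_2$ of $Q_8$, whence $\End_{\SL_1(D)\times\SL_1(D)}(\pi_1\boxtimes\pi_1)$ carries $V_2\boxtimes V_2$; the $\GSpin_4^{1,1}$-endomorphisms are then the commutant of the diagonal $Q_8$ on $V_2\otimes V_2$, which is $\C^4$ because $V_2\otimes V_2$ splits as four distinct characters of $Q_8$. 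Either of these replaces your ``I expect'' paragraph; your outline does not.
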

We would like to highlight the fact that Asgari-Choiy \cite{choiy2017} have proved the local Langlands conjectures for $\GSpin_{4}=\{(g_1,g_2)\in\GL_2\times\GL_2:\det(g_1)=\det(g_2) \}$ and its inner forms. However, in \cite[Remark 5.11]{choiy2017}, they claim that $|\Pi_{\phi}(\GSpin_4^{1,1}(F)) |=1$ if and only if $\pi_1=\pi_2\otimes\chi$ are dihedral with respect to three nontrivial quadratic characters of $F^\times$, where $\chi$ is any character of $F^\times$. In this case, they obtain the conclusion that the multiplicity in restriction from $\GL_1(D)\times\GL_1(D)$ to $\GSpin_{4}^{1,1}(F)$ could be $2$, which is false. In \S\ref{sec:GSpin}, we will show that $|\Pi_{\phi}(\GSpin_{4}^{1,1}(F)) |=4$ instead of $1$ if $\pi_1=\pi_2\otimes\chi$ are dihedral with respect to three nontrivial quadratic characters of $F^\times$. On the other hand, Dipendra Prasad gives an another proof of Theorem \ref{thmGSpin} involving pure algebras which will be presented at the end of \S\ref{sec:GSpin}.
	\begin{thm}\label{PrasadSpO}
		Let $E=F[\delta]$ be a quadratic field extension over a nonarchimedean local field $F$ of characteristic zero. Let $W_n$ be a $2n$-dimensional symplectic vector space over $F$.
		Let $\tau$ be a tempered  representation of $\Sp_{2n}(E)$. The local theta lift to $\Oo_{r,r}(E)$ of $\tau$ from $\Sp_{2n}(E)$ is denoted by $\theta_{\psi_0}^{r,r}(\tau)$.
		\begin{enumerate}[(i)]
						\item Suppose that the local theta lift $\theta^{n,n}_{\psi_0}(\tau)=0$. Then $\tau$ is $\Sp(W_n)$-distinguished if and only if $\theta_{\psi_0}^{n+1,n+1}(\tau)$ is nonzero and $\Oo_{n+1,n+1}(F)$-distinguished.
			\item 	Suppose that $\theta^{n,n}_{\psi_0}(\tau)$ is a nonzero representation of $\Oo_{n,n}(E)$ and that the local theta lift to $\Sp_{2n-2}(E)$ of $\theta^{n,n}_{\psi_0}(\tau)$ is zero. Then $\tau$ is $\Sp_{2n}(F)$-distinguished if and only $\theta^{n,n}_{\psi_0}(\tau)$ is $\Oo(V)$-distinguished for a $2n$-dimensional quadratic  space $V$ over $F$ satisfying
			$\Oo(V\otimes_F E)=\Oo_{n,n}(E)$.
		\end{enumerate}
	\end{thm}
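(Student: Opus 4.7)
The plan is to deduce both parts from a single see-saw identity, using the first-occurrence hypotheses to pin down the relevant big theta lift. Fix the $F$-quadratic space $V$ appearing in the statement: $\dim V = 2(n+1)$ in (i), and $\dim V = 2n$ with $V\otimes_F E$ split hyperbolic in (ii). Regarding $W_n\otimes_F E$ as an $F$-symplectic space of dimension $4n$ via the trace of the $E$-symplectic form, one has the natural inclusions $\Sp_{2n}(E)\subset \Sp(W_n\otimes_F E)_F$ (as the $E$-linear elements) and $\Oo(V)\subset \Oo(V\otimes_F E)$ (by base change). Together with the canonical identification $(V\otimes_F E)\otimes_E(W_n\otimes_F E) \cong V\otimes_F(W_n\otimes_F E)$ of $F$-symplectic spaces, this produces a genuine see-saw of reductive dual pairs
\[
\begin{array}{ccc}
\Sp(W_n\otimes_F E)_F & & \Oo(V\otimes_F E) \\
\cup & \times & \cup \\
\Sp_{2n}(E) & & \Oo(V).
\end{array}
\]

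Applying the see-saw identity to the pair (trivial character of $\Oo(V)$, representation $\tau$ of $\Sp_{2n}(E)$) yields
\[
\Hom_{\Oo(V)}\bigl(\theta_{\psi_0}(\tau),\,\mathbb{C}\bigr)\;\cong\;\Hom_{\Sp_{2n}(E)}\bigl(\Theta(\1_{\Oo(V)})\big|_{\Sp_{2n}(E)},\,\tau\bigr),
\]
where $\theta_{\psi_0}(\tau)$ stands for $\theta_{\psi_0}^{n,n}(\tau)$ in (ii) and $\theta_{\psi_0}^{n+1,n+1}(\tau)$ in (i), and $\Theta(\1_{\Oo(V)})$ denotes the big theta lift of the trivial character to $\Sp(W_n\otimes_F E)_F$. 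The left-hand side is exactly the $\Oo(V)$-distinction asserted in each statement, so the game is to identify the right-hand side with $\Hom_{\Sp_{2n}(F)}(\tau,\mathbb{C})$. For this I would realize the Weil representation in a Schr\"odinger model $\Sc(V^{2n})$ (taking a Lagrangian in $W_n\otimes_F E$ of $F$-dimension $2n$) and describe the $\Oo(V)$-coinvariants as a space of functions on $V^{2n}/\Oo(V)$, which by Witt's theorem is stratified by Gram matrices. The $\Sp_{2n}(E)$-action preserves this stratification; the open stratum (of non-degenerate Gram matrices) has stabilizer inside $\Sp_{2n}(E)$ isomorphic to $\Sp_{2n}(F)$, and its contribution via Frobenius reciprocity is exactly $\Hom_{\Sp_{2n}(F)}(\tau,\mathbb{C})$.

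The role of the hypotheses is to kill the contributions of the non-open (lower-rank) strata. In part (ii), the vanishing of the further theta lift from $\theta_{\psi_0}^{n,n}(\tau)$ to $\Sp_{2n-2}(E)$ is a first-occurrence condition that rules out lower-rank orbit contributions via the conservation relation and the Kudla-type filtration on the Schr\"odinger model. In part (i), the hypothesis $\theta_{\psi_0}^{n,n}(\tau)=0$ pushes the first occurrence of $\tau$ on the orthogonal tower up to the $(n+1,n+1)$-level, so that in the enlarged Schr\"odinger model only the open stratum carries nontrivial information (and automatically forces $\theta_{\psi_0}^{n+1,n+1}(\tau)\ne 0$ by persistence up the Witt tower). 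The main obstacle is precisely this orbit analysis: one must verify, under the stated first-occurrence hypotheses, that pairing $\Theta(\1_{\Oo(V)})\big|_{\Sp_{2n}(E)}$ with the tempered $\tau$ really does isolate the open $\Oo(V)$-orbit, with boundary contributions either vanishing identically or being absorbed by smaller theta lifts that are excluded by the temperedness of $\tau$.
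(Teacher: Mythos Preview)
Your see-saw setup is essentially the one the paper uses for part~(i), and the broad strategy (relate $\Oo(V)$-distinction of the theta lift to $\Sp_{2n}(F)$-distinction of $\tau$ via an orbit decomposition) is right. But there are two concrete problems.

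First, a genuine error in part~(i): the assertion that $\theta_{\psi_0}^{n,n}(\tau)=0$ ``automatically forces $\theta_{\psi_0}^{n+1,n+1}(\tau)\neq 0$ by persistence'' is false. Persistence only propagates nonvanishing \emph{upward} once it occurs; it does not prevent $r^+(\tau)=2(n+2)$. The paper treats this case separately: if $\theta_{\psi_0}^{n+1,n+1}(\tau)=0$ then by conservation $\theta_{\psi_0}^{n+2,n-2}(\tau)\neq 0$, and a \emph{different} see-saw (with $\Oo(V_E)=\Oo_{n+2,n-2}(E)$ sitting inside $\Oo(\Res_{E/F}V_E)\cong\Oo_{2n+2,2n-2}(F)$) together with $\Theta_\psi^{2n+2,2n-2}(\mathbf{1})=0$ forces $\Hom_{\Sp_{2n}(F)}(\tau,\C)=0$. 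Your proposal has no mechanism for this.

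Second, the ``orbit analysis'' you flag as the main obstacle is indeed the entire content, and the paper does not do it in the Schr\"odinger model as you outline. Instead it works with the degenerate Siegel principal series: for part~(i) one has exact sequences of $\Sp_{4n}(F)$-modules relating $R^{n+1,n+1}(\mathbf{1})$, $R^{n+2,n-2}(\mathbf{1})$ to $I(\pm\tfrac12)$ (Gan--Ichino), and an explicit filtration of $I(s)|_{\Sp_{2n}(E)}$ by $Q$--$\Sp_{2n}(E)$ double cosets with open piece $I_0(s)\cong\mathrm{ind}_{\Sp_{2n}(F)}^{\Sp_{2n}(E)}\C$. Two nontrivial inputs then finish the job: a result of Aizenbud--Gourevitch--Sahi guaranteeing $\dim\Hom_{\Sp_{2n}(E)}(I(s),\tau^\delta)\geq\dim\Hom_{\Sp_{2n}(E)}(I_0(s),\tau^\delta)$ (needed because the open-orbit piece is a \emph{sub}module, not a quotient), and Casselman's temperedness criterion to kill the boundary pieces $I_i/I_{i-1}$ at $s=-\tfrac12$. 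Using $I(1/2)$ gives one inequality and $I(-1/2)$ the other. Your sketch addresses neither the sub-versus-quotient issue nor the two-sided bound. For part~(ii) the paper in fact switches to the \emph{other} see-saw, with $\Oo_{n,n}(E)\subset\Oo_{2n,2n}(F)$ and $\Sp_{2n}(F)\subset\Sp_{2n}(E)$, so that the open orbits in $Q\backslash\Oo_{2n,2n}(F)/\Oo_{n,n}(E)$ are indexed by the $F$-forms $V$ with $V\otimes_FE$ split; this is what produces the sum $\sum_V\dim\Hom_{\Oo(V)}(\Sigma,\C)$. Fixing a single $V$ as you do does not give the ``for some $V$'' statement directly.
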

	In \cite{zhang2018theta}, assuming $p\neq 2$, Zhang showed that given a $\Sp_{2n}(F)$-distinguished regular supercuspidal representation $\tau$ of $\Sp_{2n}(E)$, there exists a $2n$-dimensional quadratic space $V$ over $F$ such that the theta lift $\theta_{\psi_0}(\tau)$ is $\Oo(V)$-distinguished. Moreover, $V\otimes_F E$ is unique. We extend Zhang's result from the regular supercuspidal representations to the tempered representations of $\Oo_{n,n}(E)$, whose first occurence index (defined in \S2) is $2n$ in the Witt tower $\{W_n\otimes_F E \}$, including the case when $p=2$.
	
	Finally, we use the twisted Jacquet modules of the Weil representation to classify all the tempered representations of $\SO(V)$ distinguished by $\SO(V')$ with $\dim(V/V')=1$, which turns out to be the local theta lifts from $\SL_2(F)$ or $\Mp_2(F)$ to $\SO(V)$. The key ingredient is that the local theta lifts to $\SO(V)$ from $\SL_2(F)$ or $\Mp_2(F)$ must not be tempered when $\dim V\geq 8$. For a small group $\SO(V)$ when $\dim V\leq 7$, the local theta lift from $\SL_2(F)$ or $\Mp_2(F)$ can be written down explicitly in terms of the Langlands parameter due to \cite[Theorem 4.1]{AG}.
	And we give a proof to \cite[Conjecture 2]{PD} in the case of non-archimedean fields.
	
	The paper is organized as follows. In \S $2$, we set up the notation about the local theta correspondence. We will review the results of Asgari-Chioy and then use the local theta correspondence to prove
	 Theorem \ref{thmGSpin}   in \S\ref{sec:GSpin}. Another proof of Theorem \ref{thmGSpin} due to Dipendra Prasad will be given at the end of \S\ref{sec:GSpin}. In \S\ref{sec:SpO}, we will show that $\tau$ is $\Sp_{2n}(F)$-distinguished if and only if the local theta lift is $\Oo(V)$-distinguished for a quadratic space $V$ over $F$. In \S$5$,  we will  classify all the tempered representations of $\SO(V)$ distinguished by $\SO(V')$ with $\dim(V/V')=1$ and then prove \cite[Conjecture]{PD} in the case of non-archimedean fields. In the appendix, we review the local theta correspondence of  tempered representations and Langlands parameters.
	
	\paragraph*{\textbf{Acknowledgments}} The author is grateful to Wee Teck Gan and Dipendra Prasad for their guidance and numerous discussions. He also wants to thank Sandeep Varma for helpful discussions. He would like to thank the referee for useful comments as well. This research was done when he was a Visiting Fellow at Tata Institute of Fundamental Research, Mumbai and it was partially supported by ERC StG grant number 637912 under revision.
	
	\section{The local theta correspondence}
	In this section, we will briefly recall some results about the local theta correspondence, following \cite{kudla1996notes}.  
	
	Let $F$ be a local field of characteristic zero.
	Consider the dual pair $\Oo(V)\times \Sp(W)$.
	For simplicity, we may assume that $\dim V$ is even. Fix a nontrivial additive character $\psi$ of $F.$
	Let $\omega_\psi$ be the Weil representation for $\Oo(V)\times \Sp(W).$ 
	If $\pi$ is an irreducible representation of $\Oo(V)$ (resp. $\Sp(W)$), the maximal $\pi$-isotypic quotient of the Weil representation $\omega_{\psi}$ has the form 
	\[\pi\boxtimes\Theta_\psi(\pi) \]
	for some smooth representation of $\Sp(W)$ (resp. some smooth representation of $\Oo(V)$). We call $\Theta_\psi(\pi )$
	the big theta lift of $\pi$. Let $\theta_\psi(\pi)$ be the maximal semisimple quotient of $\Theta_\psi(\pi),$ which is called the small theta lift of $\pi.$ 
	\begin{thm} \cite{gan2014howe,gan2014proof} One has
		\begin{enumerate}[(i)]
			\item $\theta_\psi(\pi)$ is irreducible whenever $\Theta_\psi(\pi)$ is non-zero.
			\item the map $\pi\mapsto \theta_\psi(\pi)$ is injective on its domain.
		\end{enumerate}
	\end{thm}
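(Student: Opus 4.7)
The assertions together constitute the local Howe duality conjecture, and my plan is to treat both parts in a single stroke via the doubling see-saw. The first step is to reduce (i) and (ii) to the \emph{bilinear form of Howe duality}: for every pair of irreducible representations $\pi$ of $\Oo(V)$ and $\sigma$ of $\Sp(W)$,
\[\dim\Hom_{\Oo(V)\times\Sp(W)}(\omega_\psi,\pi\boxtimes\sigma)\leq 1,\]
with the nonzero pairs $(\pi,\sigma)$ forming the graph of an injective map. Indeed, (i) is the multiplicity-one part and (ii) is the graph part, so both assertions follow from this one statement.

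The main tool is the doubling see-saw inside $\Sp(W^{\square}\otimes V)$ with $W^{\square} = W\oplus(-W)$, relating the dual pairs $(\Sp(W^{\square}),\Oo(V))$ and $(\Sp(W)\times\Sp(W),\Oo(V)\times\Oo(V))$, where $\Sp(W)\times\Sp(W)\subset\Sp(W^{\square})$ in the obvious way and $\Oo(V)\subset\Oo(V)\times\Oo(V)$ diagonally. Combined with the MVW involution, which yields $\Theta_\psi(\pi)^\vee\cong\Theta_\psi(\pi^\vee)$, the see-saw identity rewrites the product of two Hom spaces
\[\Hom_{\Oo(V)\times\Sp(W)}(\omega_\psi,\pi\boxtimes\sigma)\otimes\Hom_{\Oo(V)\times\Sp(W)}(\omega_\psi,\pi\boxtimes\sigma')^\vee\]
as a single Hom space of the form
\[\Hom_{\Sp(W)\times\Sp(W)}\bigl(I_P(\chi_V,s_0),\ \sigma\boxtimes(\sigma')^\vee\bigr),\]
where $I_P(\chi_V,s_0)$ is the degenerate principal series of $\Sp(W^{\square})$ induced from the Siegel parabolic $P$ at the distinguished reducibility point $s_0$ determined by $\dim V$.

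The heart of the argument is then the structural analysis of $I_P(\chi_V,s_0)$. I would apply Kudla's filtration of the Jacquet module of $\omega_\psi$ along a maximal parabolic of $\Sp(W^{\square})$ to induct on $\dim W$, matching each successive layer with a smaller theta correspondence together with a parabolic induction from a smaller classical group. Combined with the conservation relation of Sun--Zhu and Kudla--Rallis, this should pin down the socle and cosocle of $I_P(\chi_V,s_0)$ and yield the required multiplicity-one statement, with equality forcing $\sigma\cong\sigma'$. The main obstacle is precisely this fine analysis at the reducibility point, which is particularly delicate in residue characteristic $2$, where the classical density arguments for orthogonal groups fail and one is forced into the direct algebraic approach of Gan--Takeda using the representation theory of $p$-adic classical groups.
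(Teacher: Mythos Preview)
The paper does not prove this theorem. It is quoted as a known result, attributed to \cite{gan2014howe,gan2014proof}, with the further remark immediately after the statement that Waldspurger had established it earlier when $p\neq 2$. There is no argument in the paper to compare your proposal against.

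For what it is worth, your outline is recognisably in the spirit of the Gan--Takeda proof: the doubling see-saw on $W^{\square}=W\oplus(-W)$, the MVW involution, and the structure of the Siegel degenerate principal series $I_P(\chi_V,s_0)$ are indeed the backbone of that argument. Two places where your sketch drifts from the actual proof: first, the see-saw does not literally convert a tensor of two $\Hom$-spaces into the single $\Hom$-space you wrote; rather one computes $\Hom_{\Sp(W)\times\Sp(W)}\bigl(\Theta(\mathbf{1}_{\Oo(V)}),\sigma\boxtimes(\sigma')^\vee\bigr)$ and identifies $\Theta(\mathbf{1}_{\Oo(V)})$ with a constituent of $I_P(\chi_V,s_0)$, then reads off the bound. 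Second, the conservation relation of Sun--Zhu is not an input to the Gan--Takeda proof of Howe duality; the argument for general $\pi$ proceeds by Kudla's reduction (via Jacquet modules of $\omega_\psi$ itself, not of the doubled Weil representation) to the supercuspidal case, and the supercuspidal case is then handled directly by the doubling analysis. Invoking conservation here is circular in spirit, since many treatments of first-occurrence phenomena already presuppose Howe duality.
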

	It is called the Howe duality conjecture which has been proven by Waldspurger \cite{waldspurger1990demonstration} when $p\neq2$.
	
	Then Roberts \cite{roberts1996similitudes} extends the Weil representation to the case of similitude groups $\GO(V)\times\GSp(W)$ (the similitude character  $\lambda_V$ is surjective in this paper).
	\begin{lem}
		\cite[Lemma 2.2]{gan2011locallanglands} Let $\tau$ be an irreducible representation of $\GSp(W)$ and $\tau|_{\Sp(W)}=\oplus_i\tau_i$. Then
		\[\Theta(\tau)|_{\Oo(V)}=\oplus_i\Theta_\psi(\tau_i). \]
	\end{lem}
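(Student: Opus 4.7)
The plan is to exploit the characterization of the big theta lift $\Theta(\tau)$ as the $\GO(V)$-part of the maximal $\tau$-isotypic $\GSp(W)$-quotient of $\omega_\psi$, and to compare it with the small theta lifts of the $\Sp(W)$-components $\tau_i$. Throughout, I work with the extension of $\omega_\psi$ to the group $R=(\GO(V)\times\GSp(W))^+$ of pairs with matching similitude factors (cf.\ Roberts \cite{roberts1996similitudes}), whose restriction to $\Oo(V)\times\Sp(W)$ is the standard Weil representation.

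For each index $i$, let $K_i\subset\omega_\psi$ denote the intersection of the kernels of all $\Sp(W)$-equivariant maps $\omega_\psi\to\tau_i$, so that $\omega_\psi/K_i\cong\tau_i\boxtimes\Theta_\psi(\tau_i)$ as $\Sp(W)\times\Oo(V)$-modules. Put $J=\bigcap_i K_i$. Using that the $\tau_i$ are pairwise non-isomorphic (the restriction from $\GSp(W)$ to $\Sp(W)$ being multiplicity-free), the induced map identifies
\[
\omega_\psi/J \;\cong\; \bigoplus_i \tau_i\boxtimes\Theta_\psi(\tau_i)
\]
as $\Sp(W)\times\Oo(V)$-modules.

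The first crucial step is to show that $J$ is stable under the action of $R$. For $g\in\GSp(W)$, left multiplication by $g$ carries $K_i$ to $K_{\sigma(i)}$, where $\sigma$ is the permutation of the index set encoding how $g$ permutes the isomorphism classes of $\Sp(W)$-components of $\tau|_{\Sp(W)}$. The intersection $J$ is therefore preserved, and $\omega_\psi/J$ inherits an $R$-module structure, in particular a $\GSp(W)$-action extending the $\Sp(W)$-action.

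It then remains to verify (a) that with this extended action $\omega_\psi/J$ is $\tau$-isotypic as a $\GSp(W)$-module, and (b) that it is the maximal such quotient of $\omega_\psi$. Claim (b) is routine: any $\tau$-isotypic $\GSp(W)$-quotient, restricted to $\Sp(W)$, is $(\bigoplus_i\tau_i)$-isotypic, hence by definition of the $K_i$ factors through $\omega_\psi/J$. Claim (a)---that the resulting $\GSp(W)$-representation is $\tau$ itself rather than a sum of twists $\tau\otimes\chi$ by characters $\chi$ of $\GSp(W)/\Sp(W)\cong F^\times$---is the main obstacle, and is secured by the specific Schr\"odinger-model construction of the similitude extension: the $R$-action on $\omega_\psi$ is arranged so that the $\GSp(W)$-orbit of any $\Sp(W)$-isotypic piece $\tau_i\boxtimes\Theta_\psi(\tau_i)$ inside $\omega_\psi/J$ fills out exactly one copy of $\tau\boxtimes(\bigoplus_i\Theta_\psi(\tau_i))$. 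Granting this, $\omega_\psi/J\cong\tau\boxtimes\Theta(\tau)$ as $R$-modules, and reading off the $\Oo(V)$-part yields $\Theta(\tau)|_{\Oo(V)}=\bigoplus_i\Theta_\psi(\tau_i)$.
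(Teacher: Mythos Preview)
The paper does not prove this lemma; it is quoted from \cite[Lemma~2.2]{gan2011locallanglands} without argument, so there is no in-paper proof to compare against. Your attempt, however, contains a genuine gap.

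You work with $\omega_\psi$ as a representation of $R=(\GO(V)\times\GSp(W))^+$ and, after forming the $R$-stable quotient $\omega_\psi/J$, assert that it ``in particular'' carries a $\GSp(W)$-action extending the $\Sp(W)$-action. But $\GSp(W)$ is a \emph{quotient} of $R$ (the kernel of the second projection is $\Oo(V)\times\{1\}$), not a subgroup, and there is no natural group-theoretic splitting $\GSp(W)\hookrightarrow R$. An $R$-module structure therefore does not by itself furnish a $\GSp(W)$-action, so your subsequent discussion of whether ``the resulting $\GSp(W)$-representation'' is $\tau$ or a twist $\tau\otimes\chi$ is not well-posed in this framework. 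You then concede that claim~(a) is ``the main obstacle'' and offer only an appeal to the Schr\"odinger model in place of an argument; this is precisely the point where a proof is required and none is given.

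The argument in the cited reference sidesteps this by working with the compactly induced representation $\Omega=\mathrm{ind}_R^{\GO(V)\times\GSp(W)}\omega_\psi$, on which $\GSp(W)$ acts genuinely, so that $\Theta(\tau)$ is the $\GO(V)$-part of the maximal $\tau$-isotypic quotient of $\Omega$. Since $\lambda_V$ is surjective there is a single $(R,\Oo(V)\times\GSp(W))$-double coset in $\GO(V)\times\GSp(W)$, with intersection $R\cap(\Oo(V)\times\GSp(W))=\Oo(V)\times\Sp(W)$; Mackey theory then gives $\Omega|_{\Oo(V)\times\GSp(W)}\cong\mathrm{ind}_{\Oo(V)\times\Sp(W)}^{\Oo(V)\times\GSp(W)}\omega_\psi$. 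Taking $\tau$-coinvariants for $\GSp(W)$ and using $\tau|_{\Sp(W)}=\bigoplus_i\tau_i$ yields $\Theta(\tau)|_{\Oo(V)}=\bigoplus_i(\omega_\psi\otimes\tau_i^\vee)_{\Sp(W)}=\bigoplus_i\Theta_\psi(\tau_i)$ directly, with no twist ambiguity to resolve.
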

	
	\subsection{First occurence indices for pairs of orthogonal Witt towers} Let $W_n$ be the $2n$-dimensional symplectic vector space over $F$ with associated metaplectic group $\Sp(W_n)$ and consider the two towers of orthogonal groups attached to the quadratic spaces with trivial discriminant. More precisely, 
	 let $V^+_{2r}$ (resp. $V_{2r}^-$) be the $2r$-dimensional quadratic $F$-vector space with trivial discriminant and Hasse invariant $+1$ (resp. $-1$).
	Denote the orthogonal groups by $\Oo(V_{2r}^+)=\Oo_{r,r}(F)$ (resp. $\Oo(V_{2r}^-)=\Oo_{r+2,r-2}(F)$). For an irreducible smooth representation $\pi$ of $\Sp(W_n),$ one may consider the theta lifts $\theta^{r,r}_\psi(\pi)$ and $\theta^{r+2,r-2}_\psi(\pi)$ to
	$\Oo(V^+_{2r})$ and $\Oo(V_{2r}^-)$ respectively, with respect to a fixed non-trivial additive character $\psi$. Set
	\[\begin{cases}
	r^+(\pi)=\inf\{2r:\theta^{r,r}_\psi(\pi)\neq0 \};\\
	r^-(\pi)=\inf\{2r:\theta^{r+2,r-2}_\psi(\pi)\neq0 \}.
	\end{cases} \]
	Then Kudla-Rallis \cite{kudla2005first} and Sun-Zhu \cite{sun2012conservation} showed:
	\begin{thm}
		[Conservation Relation] For any irreducible representation $\pi$ of $\Mp(W_n),$ we have
		\[r^+(\pi)+r^-(\pi)=4n+4=4+2\dim W_n. \]
	\end{thm}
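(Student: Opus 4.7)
The plan is to prove the equality $r^+(\pi) + r^-(\pi) = 4n+4$ by establishing the two inequalities $r^+(\pi) + r^-(\pi) \geq 4n+4$ and $r^+(\pi) + r^-(\pi) \leq 4n+4$ separately.

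For the lower bound, I would combine the tower property of theta lifts with a see-saw argument. Consider the see-saw pair with vertical members $\Oo(V^+_{2r} \oplus V^-_{2s}) \supset \Oo(V^+_{2r}) \times \Oo(V^-_{2s})$ and $\Sp(W_n \oplus W_n) \supset \Sp(W_n) \times \Sp(W_n)$, and assume for contradiction that $\theta^{r,r}_\psi(\pi) = 0$ and $\theta^{s+2, s-2}_\psi(\pi) = 0$ with $2r + 2s < 4n+4$. The see-saw identity applied to $\pi \boxtimes \pi^\vee$ translates these vanishings into the vanishing of a certain doubling-type period attached to $\pi$, which can be identified with a value of the doubling zeta integral. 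Since this integral is nonzero at the relevant point, a contradiction follows, giving the desired inequality.

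For the upper bound, I would analyze Kudla's filtration of the Weil representation $\omega_\psi$ along the Jacquet functor attached to a maximal parabolic subgroup of $\Sp(W_n)$. The successive quotients of this filtration are expressible as inductions from $\GL$-factors tensored with Weil representations of the next smaller orthogonal group, now in the \emph{opposite} Witt tower (the Hasse invariant flips). Assuming $\theta^{r,r}_\psi(\pi) = 0$ at the threshold, an inductive analysis of $\Hom_{\Sp(W_n)}(\omega_\psi, \pi)$ through this filtration produces a nontrivial intertwining at the complementary level $4n+4-2r$ of the opposite tower, i.e.\ a nonzero $\theta^{s+2, s-2}_\psi(\pi)$ with $2r + 2s + 4 = 4n+4$, which is exactly the bound to be proved.

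The main obstacle lies in the boundary analysis of Kudla's filtration at the conservation threshold itself. The filtration gives short exact sequences of Jacquet modules, and extracting a nonzero $\Hom$ from the middle cokernel requires a sharp understanding of the socle and cosocle structure of a degenerate principal series of $\Sp(W_n)$ at the Howe position. Sun-Zhu resolve this via the MVW involution, which interchanges $\pi$ with $\pi^\vee$ and swaps the two Witt towers, combined with an irreducibility result for the relevant degenerate principal series; carrying out this step in detail would be the technical heart of the proof.
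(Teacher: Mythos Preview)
The paper does not give its own proof of this statement: the Conservation Relation is quoted as a known theorem, attributed to Kudla--Rallis \cite{kudla2005first} and Sun--Zhu \cite{sun2012conservation}, and is used as input elsewhere in the paper. So there is no ``paper's proof'' to compare against; your proposal is effectively a sketch of the proofs in those cited works rather than of anything in the present paper.

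That said, a couple of remarks on your sketch relative to the literature. Your attribution of the two halves is broadly right---one inequality comes from a doubling/see-saw argument (Kudla--Rallis), the other from a more delicate analysis involving the MVW involution and degenerate principal series (Sun--Zhu)---but the logic in your lower-bound paragraph is inverted. To prove $r^+(\pi)+r^-(\pi)\geq 4n+4$ you must assume both theta lifts are \emph{nonzero} at dimensions $2r$ and $2s$ with $2r+2s<4n+4$ and derive a contradiction; assuming both vanish at small dimensions is no contradiction at all. Likewise, in the upper-bound paragraph, the claim that Kudla's filtration ``flips the Hasse invariant'' is not how the standard filtration for the $(\Oo,\Sp)$ Weil representation behaves; the passage between the two Witt towers in Sun--Zhu is achieved differently, through the structure of the degenerate principal series at the boundary point together with MVW, not through a tower-switching Jacquet filtration. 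If you intend to actually write out a proof, those two points would need to be corrected.
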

In \cite{AG}, Atobe and Gan use $m^{\mbox{down}}(\pi)$ (resp. $m^{\mbox{up}}(\pi)$) to denote $\min\{r^+(\pi),r^-(\pi)\}$ (resp.  $\max\{r^+(\pi),r^-(\pi) \} $).
\subsection{The see-saw diagrams}
A pair $(G_1,H_1)$ and $(G_2,H_2)$ of reductive dual pairs in a symplectic group is called the see-saw pair if $H_1\subset G_2$ and $H_2\subset G_1$.
\begin{lem}\cite[Page 6]{prasad1996branching}
	For a see-saw pair of reductive dual pairs $(G_1,H_1)$ and $(G_2,H_2)$, let $\pi_1$ (resp. $\pi_2$) be a representation of $H_1$ (resp. $H_2$). Then we have the following isomorphism
	\begin{equation}\label{see-sawsimilitude}
	\Hom_{H_1}(\Theta_\psi(\pi_2),\pi_1)\cong \Hom_{H_2}(\Theta_\psi(\pi_1),\pi_2). 
	\end{equation} 
\end{lem}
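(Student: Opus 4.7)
The plan is to show that both sides of the claimed isomorphism are canonically identified with a common third Hom space, namely $\Hom_{H_1 \times H_2}(\omega_\psi, \pi_1 \boxtimes \pi_2)$, by unwinding the universal property of the big theta lift. The key structural ingredient supplied by the see-saw condition is that, since $H_1 \subset G_2$ and $(G_2, H_2)$ is a dual pair, $H_2$ centralizes $H_1$ inside $\Sp$, and symmetrically $H_1$ centralizes $H_2$. Hence $H_1 \times H_2$ sits in the ambient symplectic group as a product of mutually centralizing subgroups, and the restriction $\omega_\psi|_{H_1 \times H_2}$ is well-defined.

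I would then invoke the universal property of the big theta lift. By construction, $\pi_2 \boxtimes \Theta_\psi(\pi_2)$ is the maximal $\pi_2$-isotypic quotient of $\omega_\psi$ as an $H_2 \times G_2$-module, which translates into the adjunction
\[
\Hom_{G_2}(\Theta_\psi(\pi_2), \sigma) \cong \Hom_{G_2 \times H_2}(\omega_\psi, \sigma \boxtimes \pi_2)
\]
for any smooth representation $\sigma$ of $G_2$. Restricting the $G_2$-action to the subgroup $H_1$ and specializing $\sigma = \pi_1$, I expect to obtain
\[
\Hom_{H_1}(\Theta_\psi(\pi_2), \pi_1) \cong \Hom_{H_1 \times H_2}(\omega_\psi, \pi_1 \boxtimes \pi_2).
\]
Running the same argument with the two dual pairs interchanged yields
\[
\Hom_{H_2}(\Theta_\psi(\pi_1), \pi_2) \cong \Hom_{H_1 \times H_2}(\omega_\psi, \pi_1 \boxtimes \pi_2),
\]
and chaining these two identifications gives the lemma.

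The main step requiring care is the restriction-of-adjunction just used: the universal property of $\Theta_\psi(\pi_2)$ naturally gives a bijection between $G_2$-maps out of $\Theta_\psi(\pi_2)$ and certain $G_2 \times H_2$-maps out of $\omega_\psi$, whereas what I need is the same bijection at the level of $H_1$-maps. To justify this, I would realize $\Theta_\psi(\pi_2)$ explicitly as the smooth $H_2$-coinvariants of $\omega_\psi \otimes \pi_2^{\vee}$ and apply the standard coinvariant--Hom adjunction with $H_1 \times H_2$ acting; this reduction is essentially bookkeeping, but it is the only substantive check that the argument needs.
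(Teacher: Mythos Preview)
Your argument is the standard proof of the see-saw identity and is correct: both sides are identified with $\Hom_{H_1\times H_2}(\omega_\psi,\pi_1\boxtimes\pi_2)$ via the coinvariant description $\Theta_\psi(\pi_2)\cong(\omega_\psi\otimes\pi_2^\vee)_{H_2}$ and its symmetric counterpart. Note, however, that the paper does not supply its own proof of this lemma; it simply quotes the result from \cite[Page 6]{prasad1996branching}, where exactly this coinvariant argument is given. So there is nothing to compare beyond observing that your write-up reproduces Prasad's original proof.
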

Dipendra Prasad proved that the see-saw identity \eqref{see-sawsimilitude} holds for the similitude groups as well.

	 \section{The $L$-packet for $\GSpin_{4}$}\label{sec:GSpin}
	 This section focuses on the proof of Theorem \ref{thmGSpin}.
Let $$\GSpin_4(F)=\{(g_1,g_2)\in\GL_2(F)\times\GL_2(F):\det(g_1)=\det(g_2) \}$$ be a subgroup of $\GL_2(F)\times\GL_2(F)$.	 Asgari-Choiy \cite{choiy2017} used the restriction from $\GL_2\times\GL_2$ to $\GSpin_{4}$ to set up the Langlands correspondence for the generic representations of $\GSpin_{4}$. We summarize their results as follow.

Let $\tau$ be an irreducible smooth generic representation of $\GSpin_{4}(F)$. There exists a unique enhanced $L$-parameter $(\phi,\lambda)$ such that
\[\phi:WD_F\longrightarrow{}^L\GSpin_{4}=\GO_4(\mathbb{C}), \]
where $\lambda$ is a character of the component group $S_\phi=C(\phi)/C^\circ(\phi)$, where $C(\phi)$ is the centralizer of $\phi$ in $\GO_{4}(\mathbb{C})$. Let $\tilde{\tau}$ be a representation of $\GL_2(F)\times\GL_2(F)$ such that $\tilde{\tau}|_{\GSpin_{4}(F)}\supset \tau$. Then the size of the $L$-packet $\Pi_{\phi} $ is
\[|\Pi_{\phi}|=|\{(\chi,\chi):(\chi,\chi)\otimes\tilde{\tau}=\tilde{\tau} \} | \]
where $\chi:F^\times\longrightarrow\mathbb{C}^\times$ is a quadratic character of $F^\times$. 

For the inner forms of $\GSpin_{4}$, we consider the extended component group $\tilde{S}_{\phi}$, where $\tilde{S}_\phi$ sits in the following exact sequence
\[\xymatrix{1\ar[r]&\mu_2\times\mu_2\ar[r]& \tilde{S}_\phi\ar[r]&S_{\phi}\ar[r]&1 .} \]
Let $\xi_4,\xi_4^{i,j}$ be the characters of $\mu_2\times\mu_2$, where $(i,j)=(1,1),(1,2)$ or $(2,1)$. We may also regard $\xi_4$ as $\xi_4^{2,2}$ and $\GSpin_{4}=\GSpin_{4}^{2,2}$.
\begin{thm}
	\cite[Theorem 5.1]{choiy2017} Given an $L$-parameter $\phi$ of $\GSpin_{4}(F)$, there exists a bijection
$$	 \xymatrix{\Pi_{\phi}(\GSpin_{4}^{i,j})\ar[r]& \Irr(\tilde{S}_\phi,\xi_4^{i,j})\ar[l]  }$$
where $\Irr(\tilde{S}_\phi,\xi_4^{i,j})$ are the set of irreducible representations of $\tilde{S}_\phi$ which extends $\xi_4^{i,j}$, $$\GSpin_{4}^{1,2}(F)=\{(g_1,g_2)\in\GL_1(D)\times\GL_2(F):\det(g_1)=\det(g_2) \}$$
and $\GSpin_{4}^{2,1}(F)=\{(g_1,g_2)\in\GL_2(F)\times\GL_1(D):\det(g_1)=\det(g_2) \}$.
\end{thm}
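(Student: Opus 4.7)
The plan is to prove the bijection by combining the local Langlands correspondence for $\GL_2(F)$ with the Jacquet-Langlands correspondence for $\GL_1(D)$, and then running Clifford-Mackey theory for the restriction from the ambient similitude group to $\GSpin_{4}^{i,j}(F)$. First I would observe that an $L$-parameter $\phi\colon WD_F\to\GO_4(\mathbb{C})$ is, up to conjugacy, the data of a pair $(\phi_1,\phi_2)$ of two-dimensional representations of $WD_F$ with $\det\phi_1=\det\phi_2$, possibly together with an involution swapping the factors; this follows from the identification $\GSO_4(\mathbb{C})\simeq(\GL_2(\mathbb{C})\times\GL_2(\mathbb{C}))/\{(z,z^{-1})\}$ together with the outer piece $\GO_4/\GSO_4\simeq\mathbb{Z}/2$.

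Next, for each $(i,j)\in\{1,2\}^2$, I would build a candidate representation $\tilde\tau=\pi_1\boxtimes\pi_2$ of $\GL_2^{(i)}(F)\times\GL_2^{(j)}(F)$ by applying local Langlands (and Jacquet-Langlands when $i$ or $j$ equals $1$) to each $\phi_m$; this is possible exactly when $\phi_m$ is discrete whenever the corresponding factor is the non-split inner form, and it produces an irreducible representation whose central character is compatible with the condition $\det g_1=\det g_2$ defining $\GSpin_{4}^{i,j}$. I would then show that $\tilde\tau$ descends meaningfully to $\GSpin_{4}^{i,j}(F)$ and that, since the ambient quotient is abelian, Clifford theory makes $\tilde\tau|_{\GSpin_{4}^{i,j}(F)}$ multiplicity-free with irreducible constituents indexed by characters of the self-twist group
\[\Sigma(\phi)=\{\chi\in\Hom(F^\times,\mu_2):\chi\otimes\pi_m\simeq\pi_m\ \text{for}\ m=1,2\},\]
where $\chi$ is identified with the character $(\chi,\chi)$ of $(\GL_2^{(i)}\times\GL_2^{(j)})/\GSpin_{4}^{i,j}$. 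This exactly reproduces the counting formula $|\Pi_\phi|=|\{(\chi,\chi):(\chi,\chi)\otimes\tilde\tau\simeq\tilde\tau\}|$ already stated in the excerpt.

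Third, I would identify $\Sigma(\phi)$ with the component group $S_\phi=\pi_0(C_{\GO_4(\mathbb{C})}(\phi))$ via local Langlands reciprocity for self-twists, and then construct the extended component group $\tilde S_\phi$ in the sequence $1\to\mu_2\times\mu_2\to\tilde S_\phi\to S_\phi\to 1$ by incorporating the two independent central-character signs, one per $\GL_2$ factor. The four characters $\xi_4^{i,j}$ of $\mu_2\times\mu_2$ record the pair of Jacquet-Langlands signs that select the inner form $\GSpin_{4}^{i,j}$, so irreducible representations of $\tilde S_\phi$ extending $\xi_4^{i,j}$ correspond bijectively to characters of $\Sigma(\phi)$ once one has chosen the $\GSpin_{4}^{i,j}$-inner form. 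Pairing this with the Clifford decomposition from the previous step yields the required bijection $\Pi_\phi(\GSpin_{4}^{i,j})\leftrightarrow\Irr(\tilde S_\phi,\xi_4^{i,j})$.

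The main obstacle I anticipate is the bookkeeping step of canonically identifying the abstract $\tilde S_\phi$ (constructed via $\pi_0$ of the centralizer in a suitable extension of $\GO_4(\mathbb{C})$, in the Kottwitz-Arthur sense) with the restriction-theoretic group above, in a way that matches each $\xi_4^{i,j}$ precisely with the correct pair of Jacquet-Langlands signs on each factor. A secondary subtlety is the degenerate case $\phi_1\simeq\phi_2$, where the outer $\GO_4/\GSO_4$ swap contributes nontrivially to the component group, and cases where one or both of the $\phi_m$ are non-discrete: there $\Pi_\phi(\GSpin_{4}^{i,j})$ must be empty on the inner forms for which $\pi_m^{(1)}$ does not exist, and one must check that $\Irr(\tilde S_\phi,\xi_4^{i,j})$ is empty on exactly the same list.
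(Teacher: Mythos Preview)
The paper does not prove this theorem; it is quoted verbatim from \cite[Theorem~5.1]{choiy2017} as background, so there is no proof in the paper to compare your proposal against. That said, your sketch contains a genuine gap that is worth flagging, precisely because the paper itself is devoted to correcting a closely related error in \cite{choiy2017}.

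The problematic step is the sentence ``since the ambient quotient is abelian, Clifford theory makes $\tilde\tau|_{\GSpin_{4}^{i,j}(F)}$ multiplicity-free with irreducible constituents indexed by characters of the self-twist group $\Sigma(\phi)$.'' Abelianness of the quotient does \emph{not} force multiplicity-free restriction: already $\GL_1(D)/\SL_1(D)\cong F^\times$ is abelian, yet $\pi|_{\SL_1(D)}=2\tau_0$ can have multiplicity two (see the remark after the proof of Theorem~\ref{thmGSpin}). What Clifford--Mackey actually gives is that $\End_{\GSpin_4^{i,j}(F)}(\tilde\tau)$ carries a projective action of the self-twist group, and the commutant can be a matrix algebra rather than a sum of copies of $\mathbb{C}$. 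Concretely, when $\phi_1=\phi_2\otimes\chi$ is dihedral with respect to three quadratic characters, the extended component group is $\tilde S_\phi\cong\langle a\rangle\times Q_8$ (Remark~3.4), which is non-abelian; for $\xi_4^{1,2}$ the relevant element of $\Irr(\tilde S_\phi,\xi_4^{1,2})$ is the unique two-dimensional irreducible of $Q_8$, and correspondingly $|\Pi_\phi(\GSpin_4^{1,2})|=1$ with multiplicity two in the restriction (Remark~3.3). Your identification of $\Irr(\tilde S_\phi,\xi_4^{i,j})$ with characters of $\Sigma(\phi)$ therefore fails exactly in the cases the paper is about.

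To repair the argument you would need to replace the ``characters of $\Sigma(\phi)$'' bookkeeping with the full Hiraga--Saito machinery: the restriction decomposes according to irreducible (possibly higher-dimensional) representations of $\tilde S_\phi$ lying over $\xi_4^{i,j}$, with multiplicities equal to their dimensions. That is the content of the cited theorem, and it is not a consequence of the abelian-quotient observation alone.
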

Suppose that $\pi=(\pi_1,\pi_2)$ is a representation of $\GSpin_{4}^{1,1}(F)$ with $\phi=(\phi_1,\phi_2)$.
If $\phi_1=\phi_2\otimes\chi$, then
\[\Pi_{\phi}(\GSpin_{4})=\begin{cases}
1,&\mbox{ if }\phi_1\mbox{ is primitive or nontrivial on }\SL_2(\mathbb{C});\\
2,&\mbox{ if }\phi_1\mbox{ is dihedral with repect to one quadratic character};\\
4,&\mbox{ if }\phi_1\mbox{ is dihedral with respect to three quadratic characters.}
\end{cases} \]
If $\phi_1\neq\phi_2\otimes\chi$ for any character $\chi$ of $F^\times$, then $|\Pi_{\phi}(\GSpin_{4})|\leq2$.

Let $$\GSO_{2,2}(F)=\frac{\GL_2(F)\times\GL_2(F)}{\{(t,t),t\in F^\times\}}\mbox{    and    } \SO_{2,2}(F)\cong\frac{\GSpin_{4}(F)}{\{(t,t),t\in F^\times\}}.$$ Given a square-integrable representation $\rho$ of $\GL_2(F)$, then the theta lift $\theta_\psi(\rho)$ from $\GL_2(F)$ to $\GSO_{2,2}(F)$ is isomorphic to $\rho\boxtimes\rho^\vee$. Similarly, the theta lift to $\GSO_{4,0}(F)$ from $\GL_2(F)$ is
\[\theta_\psi(\rho)=JL(\rho)\boxtimes(JL(\rho))^\vee \]
where $JL(\rho)$ is the Jacquet-Langlands correspondence representation of $\GL_1(D)$ associated to $\rho$ and
\[\GSO_{4,0}(F)\cong\frac{\GL_1(D)\times\GL_1(D)}{\{(t,t):t\in F^\times \} } .\]
Now we start to prove Theorem \ref{thmGSpin}.
\begin{proof}[Proof of Theorem \ref{thmGSpin}]  Assume that $\pi=(\pi_1,\pi_2)$ is an irreducible representation of $\GL_1(D)\times\GL_1(D)$ with $L$-parameter $(\phi_1,\phi_2)$. We divide them into two cases:
\begin{itemize}
	\item If $|\Pi_{\phi}(\GSpin_4)|\leq2$, then due to Schur orthogonal relation, $|\Pi_{\phi}(\GSpin_{4})|=|\Pi_{\phi}(\GSpin_{4}^{1,1})|$ and $\pi|_{\GSpin_{4}^{1,1}(F)}$ is multiplicity free.
	\item If $|\Pi_{\phi}(\GSpin_{4})|=4$, then $\phi_1=\phi_2\otimes\chi$ for a character $\chi$ of $W_F$. Without loss of generality, we assume that $\chi=\det\phi_1$. Then $\phi_2=\phi_1^\vee$. Thus $\pi$ corresponds to a representation of $\GSO_{4,0}(F)$ and $\pi=\theta_\psi(JL(\pi_1))$, where $JL(\pi_1)$ is the Jacquet-Langlands lift of $\pi_1$. If $\Hom_{\GSpin_{4}^{1,1}(F)}(\pi,\tau)\neq0 $ for a representation $\tau$ of $\GSpin_{4}^{1,1}(F)$, then $\tau$ is a representation of $\SO_{4,0}(F) $.
	Thanks to \cite[\S6]{adler2018multiplicity}, the restriction from $\GSO(n)$ to $\SO(n)$ is multiplicity-free. Then $\dim\Hom_{\GSpin_{4}^{1,1}(F)}(\pi,\tau)=1$. In fact, 
	\[JL(\pi_1)|_{\SL_2(F)}=\tau_1\oplus\tau_2\oplus\tau_3\oplus\tau_4 \]
	and so $\pi|_{\GSpin_{4}^{1,1}(F)}=\oplus_ip^\ast(\theta_\psi(\tau_i))$, where $p:\GSpin_{4}^{1,1}(F)\longrightarrow\SO_{4,0}(F) $ is the natural projection and $p^\ast$ is the pull back. In general,
	\[\pi|_{\GSpin_{4}^{1,1}(F)}=\bigoplus_i p^\ast(\theta_\psi(\tau_i))\otimes(\mathbf{1},\chi^{-1}\det\phi_1)|_{\GSpin_{4}^{1,1}(F)}. \]
	 Therefore $|\Pi_{\phi}(\GSpin_{4}^{1,1}) |=4$ and $\pi|_{\GSpin_{4}^{1,1}(F)} $ is  multiplicity-free.
\end{itemize}
\end{proof}
\begin{rem}
	It is known that $\pi_1|_{\SL_1(D)}=2\tau_0$ is of multiplicity two if $\pi_1$ is dihedral with respect to three quadratic character, where $\tau_0$ is an irreducible representation of $\SL_1(D)$. However, the representation of two copies of $\GL_1(D)$ restricts to the subgroup $\GSpin_{4}^{1,1}(F)$ is multiplicity-free.
\end{rem}

\begin{rem}
	One can use the theta correspondence for the similitude quaternionic dual pair to show that
	\[|\Pi_{\phi}(\GSpin_{4}^{1,2}(F))|=1 \]
	when $$\phi=\chi(\phi_2\otimes\phi_2)=\chi\oplus\chi\chi_1\oplus\chi\chi_2\oplus\chi\chi_1\chi_2 :WD_F\longrightarrow\GO_4(\mathbb{C}),$$ where $\chi_1$ and $\chi_2$ are quadratic characters of $W_F$. In this case, let $\pi'=(\pi_2\otimes\chi,JL(\pi_2))$ be a representation of $\GL_1(D)\times\GL_2(F) $. Then the restricted representation $\pi'|_{\GSpin_{4}^{1,2}(F)}$ is of multiplicity two.
\end{rem}
\begin{rem} Let $\phi_1=\phi_2\otimes\chi$ and $\phi_1$ is dihedral with respect to three quadratic characters.
	On the parameter side, $S_{\phi}$ is a finite group of order $4$. The extended component group 
	$\tilde{S}_\phi$ is isomorphic to $\langle a\rangle\times Q_8$ where $a^2=1$ and $Q_8=\{\pm1,\pm i,\pm j,\pm ij \}$ is the
	quaternion group of order $8$.  The center of $\tilde{S}_\phi$ is $\langle a\rangle\times\langle-1\rangle$. Note that $\xi_4^{1,1}(-1)=1$ and $\xi_4^{1,1}(a)=-1$. Then there are four ways to extend the character $\xi_4^{1,1}$ to $\tilde{S}_\phi$ and so $$|\Irr(\tilde{S}_\phi,\xi_4^{1,1})|=4.$$
\end{rem}

We give an another proof of Theorem \ref{thmGSpin} due to Dipendra Prasad.
\begin{proof}[Another proof of Theorem \ref{thmGSpin}]
Recall that the essential case of the theorem is to prove that if $\pi \otimes \pi$
is an irreducible representation of $\GL_1(D) \times \GL_1(D)$ where $\pi$ is an irreducible
representation of $\GL_1(D)$ with two distinct quadratic self-twists, then the restriction
of  $\pi \otimes \pi$ to  $$\GSpin^{1,1}_4(F)= \{ (g_1,g_2) \in \GL_1(D) \times \GL_1(D) | \det g_1 = \det g_2\}$$
decomposes as a sum of 4 distinct irreducible representations of $\GSpin^{1,1}_4(F)$. The representation
$\pi$ (with two distinct quadratic twists) will be fixed in what follows. We begin by recalling
what the standard theory says about the  restriction of $\pi$ from $\GL_1(D)$ to $\SL_1(D)$.

It is known that $\pi$ restricted to $\SL_1(D)$ is $2\pi'$ for an irreducible representation $\pi'$
of $\SL_1(D)$. It follows that
$$\End_{\SL_1(D)}(\pi) \cong \M_2(\C),$$
and the natural action of $\GL_1(D)$ on $\End_{\SL_1(D)}(\pi) \cong \M_2(\C),$ is via algebra automorphisms
of $\M_2(\C)$, and thus gives a projective representation of $\GL_1(D)/\SL_1(D) \cong F^\times \rightarrow \PGL_2(\C)$.
This projective representation of $F^\times$ factors through the quotient of $F^\times$  by the common
kernel of the quadratic self-twists of $\pi$, and thus  $\GL_1(D)/\SL_1(D)$ acts through a quotient which is
$\Z/2 + \Z/2$, giving rise to a projective faithful representation of $\Z/2 + \Z/2$ into   $\PGL_2(\C)$. Now there is a unique
subgroup of $\PGL_2(\C)$ isomorphic to $\Z/2 + \Z/2$, consisting of the image of the diagonal matrix $(-1,1)$ in $\GL_2(\C)$, and an element
in the Weyl group of the diagonal torus. (The resulting map from $F^\times$ to $\PGL_2(\C)$ is the Langlands parameter of $\pi$ with values in $\PGL_2(\C)$, although this fact has no relevance for us here.)

The projective representation of $\GL_1(D)/\SL_1(D) \cong F^\times$ into $\PGL_2(\C)$ could be considered as the projective
representation attached to the unique 2-dimensional irreducible representation $V_2$ of $Q_8$, the
quaternionic group of order 8.

Observe that $$\End_{\SL_1(D) \times \SL_1(D)}(\pi \otimes \pi) \cong \End_{\GL_1(D)}( \pi) \otimes \End_{\GL_1(D)}( \pi) \cong
\M_2(\C) \otimes \M_2(\C),$$
and that, $\End_{\SL_1(D) \times \SL_1(D)}(\pi \otimes \pi)$ comes equipped with an action of $\GL_1(D)/\SL_1(D) \times \GL_1(D)/\SL_1(D) \cong F^\times  \times F^\times$ by algebra automorphisms, which can be realized as the representation of $Q_8 \times Q_8$
on the 4-dimensional irreducible representation $V_2 \boxtimes V_2$ of it.

To prove that $\pi\otimes \pi$ restricted to $\GSpin^{1,1}_4(F)$ decomposes as a sum of 4 distinct irreducible representations of $\GSpin^{1,1}_4(F)$, we need to prove that 
$$\End_{\GSpin^{1,1}_4(F)}(\pi \otimes \pi) = \C+\C+\C+\C,$$
as algebras. However, $\End_{\GSpin_4^{1,1}(F)}(\pi \otimes \pi)$ is simply $\Delta Q_8 \subset Q_8 \times Q_8$ invariants in
$\End_{\SL_1(D) \times \SL_1(D)}(\pi \otimes \pi)$ which is now the commutant of the action of $\Delta Q_8$ on the
4-dimensional representation $V_2 \otimes V_2$, which being the sum of 4 characters of $Q_8$, the commutant is
simply $ \C+\C+\C+\C,$ as desired.
\end{proof}

	 \section{The disctinction problems for $\Sp(W_n)$ and $\Oo(V)$}\label{sec:SpO}
In this section, we will study the disctinction problems for $\Sp(W_n)$ and $\Oo(V)$ over a quadratic field extension $E/F$.	 We will use $V_E$ (resp. $V_F$ or $V$) to denote the quadratic space over $E$ (resp. $F$).
	\subsection{Local Siegel-Weil identity} Let $(V_E,q_E)$ be a quadratic vector space over $E$.
	Let $V_F=\Res_{E/F}V_E$ be the same vector
	space  but now thought of as a vector space over $F$ with a quadratic form
	\[q_F(v)=\frac{1}{2}tr_{E/F}\circ q_E(v). \]
	If $W_n$ is a symplectic vector space over $F$ with symplectic group $\Sp_{2n}(F)$, then $W_n \otimes_F E$ is a symplectic vector space over $E$ with symplectic group $\Sp_{2n}(E)$. Then we have the following isomorphism of symplectic spaces:
	\[\Res_{E/F}(V_E\otimes (W_n\otimes_FE))=W_n\otimes V_F:=\mathbf{W}. \]
	The pair $(\Oo(V_E),\Sp_{2n}(E))$ and $(\Oo(V_F),\Sp_{2n}(F))$ of dual pairs in $\Sp(\mathbf{W})$ is a see-saw pair. 
	Given an irreducible representation $\Sigma$ of $\Oo(V_E)$, one has
		\[\dim\Hom_{\Oo(V_E)}(\Theta_\psi(\mathbf{1}),\Sigma )=\dim\Hom_{\Sp_{2n}(F)}(\Theta_{\psi_E}(\Sigma),\mathbb{C} )   \]
		where $\Theta_\psi(\mathbf{1})$ (resp. $\Theta_{\psi_E}(\Sigma)$) is the big theta lift to $\Oo(V_F)$ (resp. $\Sp_{2n}(E)$) of the trivial representation $\mathbb{C}$ (resp. the representation $\Sigma$) of $\Sp_{2n}(F)$ (resp. $\Oo(V_E)$).
	It is called the local Siegel-Weil identity which will be used in
	the proof of Theorem \ref{PrasadSpO} in the next subsection.
	
	%
\subsection{Proof of Theorem \ref{PrasadSpO}}
Now we can give the proof to Theorem \ref{PrasadSpO}.
\begin{proof}[Proof of Theorem \ref{PrasadSpO}]
Assume that $\tau$ is a tempered representation of $\Sp_{2n}(E)$. Let $\tau^\delta$ be the conjugate representation of $\tau$, i.e.
\[\tau^\delta(g)=\tau(g_\delta^{-1}gg_\delta) \]
where $g_\delta\in\GSp_{2n}(E)$ with similitude $\lambda_W(g_\delta)=\delta$.
\begin{enumerate}[(i)]
	\item If $\theta^{n+1,n+1}_{\psi_0}(\tau)$ is zero, then $\theta_{\psi_0}^{n+2,n-2}(\tau)\neq0$ by the conservation relation. Suppose that $V_E$ is a $2n$-dimensional quadratic space over $E$ with non-quasi-split othogonal group $\Oo(V_E)=\Oo_{n+2,n-2}(E)$. The theta lift to $\Oo(V_E)$ of $\tau$ is nonzero.
	 Due to \cite[Lemma 4.2.2]{hengfei2017}, the discriminant of the $4n$-dimensional $F$-vector space $V_F=\Res_{E/F}V_E$ is trivial and the Hasse invariant is $-1$. Consider the following see-saw diagram
	\[\xymatrix{\Sp_{2n}(E)\ar@{-}[d]\ar@{-}[rd] &\Oo(V_F)\ar@{-}[ld]\ar@{-}[d] \\ \Sp_{2n}(F)&\Oo(V_E) } \]
	where $\Oo(V_F)\cong \Oo_{2n+2,2n-2}(F)$. By the conservation relation, $\Theta^{2n+2,2n-2}_\psi(\mathbf{1})=0$. Then
	\[\dim\Hom_{\Sp_{2n}(F)}(\tau,\mathbb{C})=\dim\Hom_{\Oo(V_E)}(\Theta^{2n+2,2n-2}_\psi(\mathbf{1}),\Sigma)=0,   \]
	where $\Theta^{2n}_{\psi_E}(\Sigma)=\tau$ is the theta lift to $\Sp_{2n}(E)$ of $\Sigma$. Hence $\tau$ is not $\Sp(W_n)$-distinguished. 
	
	Suppose that $\theta_{\psi_0}^{n+1,n+1}(\tau)$ is nonzero. Let us consider the following
	see-saw diagrams
	\[\xymatrix{\Oo_{n+1,n+1}(E)\ar@{-}[d]\ar@{-}[rd] &\Sp_{4n}(F)\ar@{-}[d]\ar@{-}[ld]\ar@{-}[rd]&\Oo_{n,n}(E)\ar@{-}[d]\ar@{-}[ld]\\
		\Oo_{n+1,n+1}(F)&\Sp_{2n}(E)&\Oo_{n+2,n-2}(F)  } \]
	with $\Sigma=\theta_{\psi_0}^{n+1,n+1}(\tau)$. By the assumptions, $$\theta_{\psi_0}^{n,n}(\tau)=\theta_{\psi_E}^{n,n}(\tau^\delta)=0.$$ Note that $\theta^{n+1,n+1}_{\psi_E}(\tau^\delta)=\Sigma=\theta^{n+1,n+1}_{\psi_0}(\tau)$. There exists a short exact sequence 
	\[\xymatrix{0\ar[r]&R^{n+1,n+1}(\mathbf{1})\ar[r]& I(1/2)\ar[r]& R^{n+2,n-2}(\mathbf{1})\ar[r]&0 } \]
	of $\Sp_{4n}(F)$-modules,
	where $R^{i,j}(\mathbf{1})$ is the big theta lift of the trivial representation from $\Oo_{i,j}(F)$ to $\Sp_{4n}(F)$ and $I(s)$ is the degenerate principal series of $\Sp_{4n}(F)$ (see \cite[Proposition 7.2]{gan2014formal}) induced from a character $|\det|^s$ on the Siegel parabolic subgroup $Q$.
	Then 
	\[\dim\Hom_{\Sp_{2n}(E)}(I(1/2),\tau^\delta)\leq\dim\Hom_{\Oo_{n+1,n+1}(F)}(\Sigma,\mathbb{C})   .\]
	There exists a $\Sp_{2n}(E)$-equivariant filtration 
	\[0\subset I_0(s)\subset I_1(s)\subset \cdots \subset I_{n-1}(s)\subset I_{n}(s)=I(s)|_{\Sp_{2n}(E)} \]
	of $I(s)|_{\Sp_{2n}(E)}$
	such that $I_0(s)\cong ind_{\Sp_{2n}(F)}^{\Sp_{2n}(E)}\mathbb{C}$ and
	$I_{i}(s)/I_{i-1}(s) \cong ind_{Q_i}^{\Sp_{2n}(E)}(|\det|_E^{s+i/2} \otimes \mathbf{1})$ for $i = 1,2,\cdots ,n$. Here
	$Q_i \cong (\GL_i(E)\times \Sp_{2n-2i}(F))\cdot(Mat_{2i,2n-2i}(F)\times Sym^i (E))$, where $Mat_{m,n}(F)$ is the matrix space consisting of all $m \times n$ matrices and $Sym^i(E)$ consists of symmetric matrices in $Mat_{i,i}(E)$.
	Since $\tau$ is tempered, $\tau^\delta$ is tempered.
	Observe that $\Sp_{2n}(E)$ is the fixed point of a involution on $\Sp_{4n}(F)$, which is given by the scalar matrix $$g=\sqrt{d}\in \GSp_{2n}(E)\subset \GSp_{4n}(F)$$ acting on $\Sp_{4n}(F)$ by conjugation.
	Due to \cite[Theorem 2.5]{olafsson1987fourier}, there exists a polynomial $f$ on $\Sp_{4n}(F)$ such that 
	the complements of the open orbits in the double coset $Q\backslash \Sp_{4n}(F)/\Sp_{2n}(E)$ is the zero set of $f$. Thanks to \cite[Proposition 4.9]{dima2018analytic}, the multiplicity $\dim\Hom_{\Sp_{2n}(E)}(I(1/2),\tau^\delta) $ is at least $\dim\Hom_{\Sp_{2n}(E)}(I_0(1/2),\tau^\delta)$ where the submodule $I_0$ corresponds to the open orbits. More precisely,
	\[\dim\Hom_{\Sp_{2n}(F)}(\tau,\mathbb{C})=\dim\Hom_{\Sp_{2n}(E)}(I_0(1/2),\tau^\delta)\leq\dim\Hom_{\Sp_{2n}(E)}(I(1/2),\tau^\delta)\leq \dim\Hom_{\Oo_{n+1,n+1}(F)}(\Sigma,\mathbb{C}). \]
	Then it suffices to show that $\dim\Hom_{\Sp_{2n}(F)}(\tau,\mathbb{C})\geq \dim\Hom_{\Oo_{n+1,n+1}(F)}(\Sigma,\mathbb{C}) $.
	
	Due to \cite[Proposition 7.2]{gan2014formal}, there are two exact sequences of $\Sp_{4n}(F)$-modules
	\[\xymatrix{0\ar[r]&R^{n,n}(\mathbf{1})\oplus R^{n+2,n-2}(\mathbf{1})\ar[r]& I(-1/2)\ar[r]& R^{n+1,n+1}(\mathbf{1})\cap R^{n+3,n-1}(\mathbf{1})\ar[r]&0 } \]
	and
	\[\xymatrix{0\ar[r]&R^{n+1,n+1}(\mathbf{1})\cap R^{n+3,n-1}(\mathbf{1})\ar[r]& R^{n+1,n+1}(\mathbf{1})\ar[r]& R^{n,n}(\mathbf{1})\ar[r]&0 }. \]
	Then $\dim\Hom_{\Sp_{2n}(E)}(I(-1/2),\tau^\delta)=\dim\Hom_{\Sp_{2n}(E)}(R^{n+1,n+1}(\mathbf{1})\cap R^{n+3,n-1}(\mathbf{1}),\tau^\delta)$ and \[\dim\Hom_{\Sp_{2n}(E)}(R^{n+1,n+1}(\mathbf{1})\cap R^{n+3,n-1}(\mathbf{1}),\tau^\delta) \geq\dim\Hom_{\Sp_{2n}(E)}(R^{n+1,n+1}(\mathbf{1}),\tau^\delta). \]
	Note that $\tau^\delta$ does not occur on the boundary of $I(-1/2)$. If not, assuming 
	\[\dim\Hom_{\Sp_{2n}(E)}(I_i(s_0)/I_{i-1}(s_0),\tau^\delta )  \]
	for some $i$ and $s_0=-1/2$, then 
	\[\Hom_{\GL_i(E) }(|\det |_E^{s_0+i/2},R_{\bar{Q}'_i}(\tau^\delta) ) \neq0 \]
	with $s_0+i/2\geq0$, where ${Q}'_i$ is a maximal parabolic subgroup of $\Sp_{2n}(E)$ with Levi subgroup $\GL_i(E)\times\Sp_{2n-2i}(E)$ and $R_{\bar{Q}'_i}$ is the Jaquect functor with respect to the opposite parabolic $\bar{Q}'_i$. Due to Casselman's criterion for the tempered representation $\tau^\delta$, $s_0+i/2=0$ and so $i=1$. Moreover, the $L$-parameter of $\tau$ is given by 
	$2\mathbb{C}\oplus\phi'$ with $\phi':WD_E\rightarrow\SO_{2n-1}(\mathbb{C})$, which implies that $\theta_{\psi_0}^{n,n}(\tau)\neq0$ by \cite[Theorem 4.3]{AG}. It contradict the assumption $\theta_{\psi_0}^{n,n}(\tau)=0$.
	 Therefore
	\[\dim\Hom_{\Sp_{2n}(F)}(\tau,\mathbb{C})=\dim\Hom_{\Sp_{2n}(E)}(I_0(-1/2),\tau^\delta)\geq\dim\Hom_{\Sp_{2n}(E)}(I_0(-1/2),\tau^\delta) \]
	and so $\dim\Hom_{\Sp_{2n}(F)}(\tau,\mathbb{C})\geq \dim\Hom_{\Oo_{n+1,n+1}(F)}(\Sigma,\mathbb{C})$. Thus $$\dim\Hom_{\Sp_{2n}(F)}(\tau,\mathbb{C})=\dim\Hom_{\Oo_{n+1,n+1}(F)}(\Sigma,\mathbb{C}).$$
	
	\item If $\theta^{n,n}_{\psi_0}(\tau)$ is nonzero, then one can consider 
	\[\xymatrix{\Sp_{2n-2}(E)\ar@{-}[d]\ar@{-}[rd] &\Oo_{2n,2n}(F)\ar@{-}[d]\ar@{-}[rd]\ar@{-}[ld] &\Sp_{2n}(E)\ar@{-}[ld]\ar@{-}[d] \\ \Sp_{2n-2}(F)&\Oo_{n,n}(E)&\Sp_{2n}(F) } \]
	with $\tau=\Theta^{2n}_{\psi_0}(\Sigma)=\Theta_{\psi_E}^{2n}(\Sigma^\delta)$. There exists a short exact sequence of $\Oo_{2n,2n}(F)$-modules
	\[\xymatrix{0\ar[r]& R^{2n}(\mathbf{1})\ar[r]&\mathcal{I}(1/2)\ar[r]&R^{2n-2}(\mathbf{1})\otimes\det\ar[r]&0   } \]
where $R^{2r}(\mathbf{1})$ is the big theta lift to $\Oo_{2n,2n}(F)$ of the trivial representation from $\Sp_{2r}(F)$, $\det$ is the determinant map of $\Oo_{2n,2n}(F)$ and $\mathcal{I}(s)$ is the Siegel  degenerate principal series  of $\Oo_{2n,2n}(F)$. (See \cite[Proposition 7.2]{gan2014formal}.) Note that $$\Oo_{n,n}(E)\subset\SO_{2n,2n}(F).$$ Then there is an inequality
\[\dim\Hom_{\Oo_{n,n}(E)}(\mathcal{I}(1/2),\Sigma^\delta )\leq\dim\Hom_{\Sp_{2n}(F)}(\tau,\mathbb{C})+\dim\Hom_{\Sp_{2n-2}(F)}(\Theta^{2n-2}_{\psi_0}(\Sigma),\mathbb{C}),   \]
where $\Theta^{2n-2}_{\psi_0}(\Sigma)$ is the big theta lift to $\Sp_{2n-2}(E)$ of $\Sigma$ of $\Oo_{n,n}(E)$.

By the assumption $\Theta_{\psi_0}^{2n-2}(\Sigma)=0$, we can use the same idea to show that
\[\dim\Hom_{\Sp_{2n}(F)}(\tau,\mathbb{C})=\dim\Hom_{\Oo_{n,n}(E)}((R^{2n}(\mathbf{1})\otimes\det) \cap R^{2n}(\mathbf{1}),\Sigma^\delta )= \dim\Hom_{\Oo_{n,n}(E)}(\mathcal{I}(-1/2),\Sigma^\delta)  \]
which equals to the sum
\begin{equation}
\sum_V\dim\Hom_{\Oo(V)}(\Sigma,\mathbb{C}) 
\end{equation}
where $V$ runs over all $2n$-dimensional quadratic spaces over $F$ such that $\Oo(V\otimes_F E)=\Oo_{n,n}(E)$. Therefore, $\tau$ is $\Sp_{2n}(F)$-distinguished if and only if $\Sigma=\theta^{n,n}_{\psi_0}(\tau)$ is $\Oo(V)$-distinguished for some $V$.
\end{enumerate}
This finishes the proof.
\end{proof}
\begin{rem}
	If $\Theta^{2n-2}_{\psi_0}(\Sigma)$ is nonzero, then $\Sigma$ is $\Oo(V)$-distinguished implies that $\tau$ is $\Sp(W_n)$-distinguished. However, it is not obvious that whether  $\tau$ is $\Sp_{2n}(F)$-distinguished implies that $\Sigma$ is $\Oo(V)$-distinguished for some certain quadratic space $V$ over $F$.
\end{rem}
We give a conjecture to end this section.
	 \paragraph*{\textbf{Conjecture}} Assume that $E$ is a quadratic field extension over $F$.
	 Let $\tau$ be a supercuspidal representation of $\Sp_{2n}(E)$.
	 Assume that $\theta(\tau)\neq0$ for some $V_E$ with discriminant algebra a quadratic extension of $E$ which does not come from $F$, and that  $\theta(\tau)=0$ for all $V_E$ with discriminant algebra a quadratic extension of $E$ which comes from $F$ (in particular if the normalized discriminant is trivial).
	 Then $\tau$ is not $\Sp_{2n}(F)$-distinguished.

	 \begin{rem}
	 	Due to	the fact that the Prasad conjectures for the L-parameters for theta liftings in \cite{D} have been proved by Gan-Ichino in \cite[Appendix C]{gan2014formal}, we can reformulate the above conjecture in terms of the $L$-parameter of $\tau$. Suppose that the $L$-parameter $\phi_\tau=\sum_{i=1}^r \phi_i$  where $\phi_i$ are distinct irreducible orthogonal representations of $WD_E$ and that $\phi_\tau$ contains at least one quadratic character $\chi$ of $E^\times$. Assume that $\chi_i|_{F^\times}\neq\mathbf{1}$ for each $1$-dimensional summand $\phi_i=\chi_i$ (the quadratic characters of $E^\times$) of $\phi_\tau$. Then $\tau$ is not $\Sp_{2n}(F)$-distinguished.
  On the Galois side, there does not exist any extension of $\phi_\tau$ from $WD_E$ to $WD_F$ which can be seen as part of the conjecture of Dipendra Prasad in \cite[Conjecture 2]{prasad2015arelative} for $\Sp_{2n}$. 
	 \end{rem}
	 	 \begin{rem}
	 	In fact, we have studied the case when $\Oo(\Res_{E/F} V_E)\cong\Oo_{2n+2,2n-2}(F)$ in the proof of Theorem \ref{PrasadSpO} and have  verified the conjecture for $n\leq2$ in \cite{lu2018pacific,lu2019Sp(4)}. 
	 \end{rem}
\section{The $\Sp_4(F)$-distinguished representation of $\SU_{2,2}$}
In this section, we will show that there does not exist any tempered representation of $\SU_{2,2}(E/F)$ distinguished by $\Sp_4(F)$, where $E/F$ is a quadratic field extension. 

 Let $V$ be  an $n$-dimensional quadratic space over $F$ with $n\geq5$. Let 
\[G=\begin{cases}
\SL_2(F)&\mbox{if }n \mbox{ is even};\\
\Mp_2(F)&\mbox{if }n \mbox{ is odd}.
\end{cases} \]
We will consider the theta correspondence between the special orthogonal group $\SO(V)$ and $\SL_2(F)$ or $\Mp_2(F)$.
Fix a unitary nontrivial additive character $\psi$ of $F$. Let $\omega_\psi$ be the Weil representation of $\Mp_{2n}(F)$. Given an irreducible representation $\Sigma$ of $\SO(V)$, there exists a unique (genuine) representation $\tau$ of $G$ such that the maximal $\Sigma$-isotropic quotient of $\omega_\psi$ is of the form
\[\Sigma\boxtimes\tau, \]
where $\tau$ is called the big theta lift of $\Sigma$. 
Given a tempered representation $\Sigma$ of $\SO(V)$, assume that the small theta lift $\theta_\psi(\Sigma)$ is a nonzero representation of $G$. This means that the first occurence of $\Sigma$ is equal to $2$, i.e. $m^{\mbox{down}}(\Sigma)=2$ in the sense of \cite{AG}. We can use the theta correspondence for the pair $(\SO(V),G)$ instead of the dual pair $(\Oo(V),G)$ if $\dim V\geq3$. (See \cite[\S5]{D}.) Here $\dim V$ is very small, so the local theta correspondence for $(\SO(V),G)$ can be very explicit.
\begin{thm}
	Let $V'\subset V$ be  a subspace of $V$ of codimension one and $\dim_FV\geq5
	$. If there exists a tempered representation $\Sigma$ of $\SO(V)$ distinguished by $\SO(V')$, then the rank of the subgroup $\SO(V')$ is less or equal to $1$.
	Thus, the pair $(\SO(V),\SO(V'))$ sits inside the following table
\begin{table}[h]
		\renewcommand*{\arraystretch}{1.5}
	\begin{tabular}{|c|c|c|c|c|c|c|}
		\hline $\SO(V) $&$\SO_{3,2}(F)$&$\SO_{4,1}(F)$&$\SO_{4,1}(F)$&$\SO_{4,2}(F)$&$\SO_{5,1}(F)$ &$\SO_{5,2}(F)$\\
		\hline $\SO(V')$&$\SO_{3,1}(F)$&$\SO_{4,0}(F)$&$\SO_{3,1}(F)$&$\SO_{4,1}(F)$&$\SO_{4,1}(F)$&$\SO_{5,1}(F)$\\
		\hline
	\end{tabular}\end{table} 
\end{thm}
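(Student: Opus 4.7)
The plan is to translate the distinction hypothesis into a constraint on the bottom of a Witt tower for $\Sigma$ via theta correspondence, and then invoke the paper's key temperedness obstruction to bound $\dim V$. Write $V = V' \oplus L$ with $\dim L = 1$, and set $G = \SL_2(F)$ if $\dim V$ is even, $G = \Mp_2(F)$ if $\dim V$ is odd, so that $(\SO(V), G)$ is a dual pair in the sense recalled in \S 2. The key intermediate claim to establish is: if the tempered $\Sigma$ of $\SO(V)$ admits a nonzero $\SO(V')$-invariant functional, then the small theta lift $\theta_\psi(\Sigma)$ to $G$ is nonzero, equivalently $m^{\mathrm{down}}(\Sigma) = 2$ in the notation of Atobe--Gan.

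To prove this claim I would use the see-saw attached to the dual pairs $(\SO(V), \Sp(W))$ and $(\SO(V') \times \SO(L), \Sp(W))$, together with the splitting of the Weil representation along the orthogonal decomposition $V = V' \oplus L$. Because $\SO(L)$ is trivial when $\dim L = 1$, an $\SO(V')$-invariant functional on $\Sigma$ produces a morphism from $\Sigma$ into the Weil representation attached to $V'$; a twisted Jacquet module computation of $\omega_\psi$ along the unipotent radical of the Siegel-type parabolic of $\Sp(W)$ stabilizing an isotropic line then extracts a nonzero $G$-component inside $\Theta_\psi(\Sigma)$. Casselman's criterion applied to the tempered $\Sigma$ rules out the possibility that the distinction is absorbed by a strictly nontempered layer of the Jacquet module, and pins the occurrence at the very bottom of the tower. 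This is the technical heart of the argument and, in my view, the main obstacle.

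With $\theta_\psi(\Sigma) \neq 0$ to $G$ and $\Sigma$ tempered, one invokes the key observation highlighted in the introduction: the local theta lift from $\SL_2(F)$ or $\Mp_2(F)$ to $\SO(V)$ cannot be tempered once $\dim V \geq 8$. This follows from the conservation relation together with \cite[Theorem 4.1]{AG}, which realizes these small lifts as Langlands quotients with a strictly positive exponent once the orthogonal dimension is large enough. Hence $\dim V \leq 7$, and combined with the hypothesis $\dim V \geq 5$ we are reduced to $\dim V \in \{5, 6, 7\}$. For these small cases \cite[Theorem 4.1]{AG} gives an explicit description of the theta lift from $\SL_2(F)$ or $\Mp_2(F)$ to $\SO(V)$ in terms of Langlands parameters, so one enumerates the tempered $\Sigma$ arising this way and checks, for each, which codimension-one subspace $V'\subset V$ can support an invariant functional; the resulting short list is precisely the table, and in every entry the rank of $\SO(V')$ is at most $1$. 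Any $V'$ whose $\SO(V')$ had rank $\geq 2$ would require an additional occurrence of $\Sigma$ further up a companion tower, contradicting $m^{\mathrm{down}}(\Sigma)=2$.
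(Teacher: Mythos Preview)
Your overall architecture matches the paper's: reduce $\SO(V')$-distinction of a tempered $\Sigma$ to nonvanishing of the theta lift to $G=\SL_2$ or $\Mp_2$, deduce $m^{\mathrm{down}}(\Sigma)=2$, use Atobe--Gan to force $\dim V\le 7$, then handle $\dim V\in\{5,6,7\}$ by hand. Your first step is essentially a rederivation of \cite[Proposition~9]{PD}, which the paper simply cites; the twisted Jacquet module computation you sketch is indeed how that proposition is proved, but the invocation of Casselman's criterion there is unnecessary---the Jacquet module of the Weil representation along $N$ already gives the identity directly, with no temperedness input needed at that stage.

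The genuine gap is in your case analysis. You write that one ``checks, for each, which codimension-one subspace $V'\subset V$ can support an invariant functional,'' but you have not said how to perform this check, and your closing sentence (that $\mathrm{rank}\,\SO(V')\ge 2$ would force an occurrence further up a companion tower) is not a valid argument: the rank of $\SO(V')$ is not what governs tower position. What the paper actually uses is the precise identity
\[
\dim\Hom_{N}(\tau,\psi_a)=\dim\Hom_{\SO(V^a)}(\Sigma,\mathbb{C}),
\]
where $\tau=\Theta_\psi(\Sigma)$ (shown to be irreducible tempered via \cite[Propositions~5.4,~5.5]{AG}) and $V^a\subset V$ is the codimension-one subspace indexed by $a\in F^\times/(F^\times)^2$. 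This converts the distinction question into a question about \emph{which} Whittaker models $\tau$ supports on $\SL_2$ or $\Mp_2$. The exclusion of the higher-rank $V'$ (e.g.\ $\SO_{2,2}\subset\SO_{3,2}$, or $\SO_{3,2}\subset\SO_{4,2}$) then comes from the fact that in those cases $\tau$ simultaneously participates in theta correspondence with the \emph{anisotropic} or non-split form in the companion tower, which forces $\tau$ to be non-$\psi$-generic for the value of $a$ corresponding to the split $V'$. Without this Whittaker-model identity and the genericity analysis it enables, your enumeration step is not actually executable.
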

\begin{proof}
	Thanks to \cite[Proposition 9]{PD}, the assumption that $\Sigma$ is $\SO(V
	')$-distinguished implies that the small theta lift $\theta_\psi(\Sigma)$ from $\SO(V)$ to $G$ is nonzero and that the big theta lift $\tau$ of $\Sigma$ is a $\psi_a$-generic representation of $G$, where $\psi_a(x)=\psi(ax)$ is a nontrivial additive character of $F$ for $a\in F^\times/{F^\times}^2$. Since $\theta_\psi(\Sigma)\neq0$, we obtain $m^{\mbox{down}}(\Sigma)=2$. Due to the chain condition (defined in the Appendix) in \cite[Theorem 4.1]{AG}, there is an upper bound for the dimension of $V$, i.e. $\dim V=n\leq7$. If $n=\dim V>5$ or the Langlands parameter $\phi_\Sigma\neq 2S_2$(where $S_r$ is the irreducible algebraic representation $\mathrm{Sym}^{r-1}(\mathbb{C})$ of $\SL_2(\mathbb{C})$), then the Langlands parameter of $\Sigma$ contains $S_{n-3}$ with multiplicity one. \cite[Proposition 5.4]{AG} implies that the big theta lift $\tau=\Theta_\psi(\Sigma)$ is irreducible and tempered. So $\tau=\theta_\psi(\Sigma)$ and
	\begin{equation}\label{p-adicwhit}
	\dim\Hom_{N}(\tau,\psi_a)=\dim\Hom_{\SO(V^a)}(\Sigma,\mathbb{C}),
	\end{equation} 
	where $N\cong F$ is the unipotent subgroup in $G$ and $V^a$ is the subspace of $V$.
	If $n=5$ and $\phi_\Sigma=2S_2$, then $\theta_\psi(\Sigma)$ is a discrete series representation of $G$. Moreover, all irreducible subquotients of $\tau$ are tempered due to \cite[Proposition 5.5]{AG} and so $\tau=\theta_\psi(\Sigma)$. Furthermore, $m^{\mbox{up}}(\tau)=\dim V$. According to $n$, there are a few cases:
	\begin{enumerate}[(i).]
		\item The case $n=5$. There are two subcases.
		\begin{itemize}
			\item If $\SO(V)\cong\PGSp_4(F)$ is split, then the subgroup $\SO(V')$ may be $\SO_{2,2}(F)$ or $\SO_{3,1}(F)\cong \GL_2(E)^\natural/F^\times$, where
			\[\GL_2(E)^\natural=\{g\in\GL_2(E):\det(g)\in F^\times \}\cong\GSpin(V'). \]
			 If $\SO(V')\cong\SO_{2,2}(F)$ is split, then  $a=1$ and
			\[\dim\Hom_{\SO(V')}(\Sigma,\mathbb{C})=\dim\Hom_{N}(\tau,\psi). \]
			Since $\tau$ participates in the theta correspondence between $PD^\times$ and $\Mp_2(F)$, it is not $\psi$-generic. Otherwise, if $\tau$ is $\psi$-generic, then the theta lift to $\PGL_2(F)$ of $\tau$ will be nonzero and tempered, which implies that $\Sigma$ is not tempered. Hence the tempered representation $\Sigma$ can never be $\SO_{2,2}(F)$-distinguished. However, \eqref{p-adicwhit} implies that $\Sigma$ is $\SO_{3,1}(F)$-distinguished  if and only if $\tau$ is $\psi_a$-generic,
			 where $a=\epsilon\in F^\times\setminus NE^\times$ and $\SO(V^a)\cong\SO_{3,1}(F)$. Here the quadratic field extension $E/F$ depends on $\tau$.
			\item If $\SO(V)\cong\PGSp_{1,1}(F)$ (the pure inner form of $\PGSp_4(F)$) is not quasi-split, then $\tau$ could be a tempered principal series representation of $G$, which is both $\psi$-generic and $\psi_{\epsilon}$-generic. In this case, $\Sigma$ could be both $\SO_{4,0}(F)$-distinguished and $\SO_{3,1}(F)$-distinguished.
		\end{itemize}
		\item The case $n=6$. The theta lift from $\SL_2(F)$ to the split special orthogonal group $\SO_{3,3}(F)$ can never be a tempered representation. (See \cite[\S4]{AG}.) So we only study the cases $\SO(V)\cong\SO_{4,2}(F)$ or $\SO(V)\cong\SO_{5,1}(F)$, where $\SO_{4,2}(F)$ is the special orthogonal group (quasi-split) of a $6$-dimensional quadratic vector space with discriminant $E$ and Hasse invariant $+1$ and $\SO_{5,1}(F)$ is the special orthogonal group (non-split) of a $6$-dimensional quadratic vector space of trivial discriminant.
		\begin{itemize}
			\item If $\SO(V)\cong\SO_{5,1}(F)$, then $\tau$ could be an irreducible principal series $I(\chi)$ of $\SL_2(F)$. Then  $\Sigma$ is $\SO_{4,1}(F)$-distinguished with Langlands parameter
			\[\phi_\Sigma=\chi+\mathbf{1}+\chi^{-1}+S_3. \]
			\item If $\SO(V)\cong\SO_{4,2}(F)$, then the representation $\tau$ of $\SL_2(F)$ is not $\psi$-generic but $\psi_{\epsilon}$-generic. So $\Sigma$ can never be $\SO_{3,2}(F)$-distinguished. However, $\Sigma$ is $\SO_{4,1}(F)$-distinguished once $\theta_\psi(\Sigma)\neq0$.
		\end{itemize}
	\item  The case $n=7$. The theta lift from $\Mp_2(F)$ to the split group $\SO_{4,3}(F)=\SO(V)$ must be nontempered, where the discriminant of $V$ is trivial. So we only need to consider the case $\SO(V)\cong\SO_{5,2}(F)$. There is only one case that $\phi_\Sigma=S_2\oplus S_4$ and $\tau=\omega_\psi^-$ is the  $\psi$-generic odd Weil representation of $\Mp_2(F)$. Therefore, $\Sigma$ is $\SO_{5,1}(F)$-distinguished where $a=1$ and $\SO(V^a)\cong\SO_{5,1}(F)$.
	\end{enumerate}
In a short summary, the rank of $\SO(V')$ is less or equal to $1$. Therefore, we have finished the proof.
\end{proof}
\begin{rem}
	If $\dim_FV=4$ or $3$, then $\dim_FV'$ is $3$ or $2$ respectively and so the rank of $\SO(V')$ is less or equal to $1$ automatically. The case $\SO_3\hookrightarrow\SO_4$ has been investigated in \cite{hengfei2016new}. Thus we have proved \cite[Conjecture 2]{PD} in the case of non-archimedean fields.
\end{rem}
\begin{coro}Assume that $\dim V=6$ and $\SO(V)=\SO_{4,2}(F)$. There is an isogeny $\SU_{2,2}\rightarrow\SO_{4,2}$ with kernel $\{\pm1\}$. (See \cite[\S6]{PD}.) There are two isogeny diagrams $$\xymatrix{\SU_{2,2}\ar[r]&\SO_{4,2}&&\SU_{2,2}\ar[r]&\SO_{4,2}\\ \Sp_4\ar@{^{(}->}[u]\ar[r]&\SO_{3,2}\ar@{^{(}->}[u] &\mbox{and}&\Sp_{1,1}\ar@{^{(}->}[u]\ar[r]&\SO_{4,1}\ar@{^{(}->}[u]  } $$
	where
	$\Sp_{1,1}$ is the unique inner form of $\Sp_4$ defined over $F$. Then
	\begin{itemize}
		\item  There does not exist any tempered representation of $\SU_{2,2}(E/F)$ distinguished by $\Sp_4(F)$.
		\item There do exist  supercuspidal representations of $\SU_{2,2}(E/F)$ distinguished by $\Sp_{1,1}(F)$.
	\end{itemize}
\end{coro}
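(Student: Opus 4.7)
The plan is to reduce both claims to the previous theorem by means of the isogeny $\SU_{2,2}\to\SO_{4,2}$ and its restrictions to the two isogeny diagrams $\Sp_4\to\SO_{3,2}$ and $\Sp_{1,1}\to\SO_{4,1}$ displayed in the statement.

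First I would observe that the kernel $\{\pm1\}$ of the isogeny is central and lies in both $\Sp_4(F)$ and $\Sp_{1,1}(F)$; thus any representation $\pi$ of $\SU_{2,2}(E/F)$ which admits a nonzero $\Sp_4(F)$- or $\Sp_{1,1}(F)$-invariant functional must have trivial central character on $\{\pm1\}$, and therefore descends to a representation $\bar\pi$ of $\SO_{4,2}(F)$ (up to the standard Clifford-theoretic bookkeeping along the image of $\SU_{2,2}(F)\to\SO_{4,2}(F)$, whose cokernel is controlled by $H^1(F,\{\pm1\})$). The corresponding branching problems match under the subgroup isogenies:
\[\Hom_{\Sp_4(F)}(\pi,\C)=\Hom_{\SO_{3,2}(F)}(\bar\pi,\C),\qquad \Hom_{\Sp_{1,1}(F)}(\pi,\C)=\Hom_{\SO_{4,1}(F)}(\bar\pi,\C),\]
and both of the properties \emph{tempered} and \emph{supercuspidal} pass between $\pi$ and $\bar\pi$ since matrix coefficients agree on the image.

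For the first bullet, a tempered $\bar\pi$ of $\SO_{4,2}(F)$ distinguished by $\SO_{3,2}(F)$ is precisely the configuration $n=6$, $\SO(V)=\SO_{4,2}(F)$, $\SO(V')=\SO_{3,2}(F)$ already ruled out in the proof of the theorem: there the theta lift $\tau=\theta_\psi(\bar\pi)$ to $\SL_2(F)$ is forced to be $\psi_\epsilon$-generic but not $\psi$-generic (since $\SO_{4,2}(F)$ is quasi-split with nontrivial discriminant), so the identity \eqref{p-adicwhit} with $a=1$ forces $\Hom_{\SO_{3,2}(F)}(\bar\pi,\C)=0$, contradicting the assumed distinction. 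Hence no tempered $\pi$ on $\SU_{2,2}(E/F)$ can be $\Sp_4(F)$-distinguished.

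For the second bullet, I would construct $\bar\pi=\Sigma$ directly via theta lifting. Choose a $\psi_\epsilon$-generic supercuspidal representation $\tau$ of $\SL_2(F)$ (for example a dihedral supercuspidal induced from a suitably chosen ramified character of a quadratic extension of $F$) arranged so that the first occurrence $r^+(\tau)$ in the Witt tower of $V$ is exactly $6$; by \cite[Theorem 4.1]{AG} together with the conservation relation, the lift $\Sigma=\theta_\psi^{4,2}(\tau)$ is then nonzero, tempered, and in fact supercuspidal, supercuspidality being preserved through theta lifting at the first occurrence. The $n=6$ analysis of the theorem immediately yields that $\Sigma$ is $\SO_{4,1}(F)$-distinguished, and pulling back along $\SU_{2,2}\to\SO_{4,2}$ produces the desired supercuspidal representation of $\SU_{2,2}(E/F)$ distinguished by $\Sp_{1,1}(F)$. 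The main obstacle will be pinning down the first-occurrence index for a concrete supercuspidal $\tau$ and handling the Clifford-theoretic subtleties in the descent/lift across $\SU_{2,2}\to\SO_{4,2}$ on $F$-points; both amount to careful parameter bookkeeping along the lines of \cite[\S4]{AG}, and that is where the real work of the corollary sits.
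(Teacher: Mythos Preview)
Your proposal is correct and follows essentially the same route as the paper, which in fact gives no explicit proof of the corollary: it is meant to follow immediately from the preceding theorem via the two isogeny diagrams, exactly as you argue. Your elaboration for the second bullet---producing a supercuspidal $\Sigma$ on $\SO_{4,2}(F)$ as a first-occurrence theta lift from a suitable supercuspidal $\tau$ of $\SL_2(F)$---is a reasonable way to make explicit the existence the paper leaves implicit; just be careful that the notation $r^+(\tau)$ in the paper refers to the trivial-discriminant tower, whereas here you are working in the tower with discriminant $E$, so phrase the first-occurrence condition accordingly.
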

\begin{rem}
	It generalizes the result of Dijols and Prasad \cite{PD} that there does not exist any supercuspidal representation of $\SU_{2,2}(F)$ distinguished by $\Sp_4(F)$.
\end{rem}

\appendix
\section{The local theta correspondence of tempered representations and Langlands parameters }
The purpose of this appendix is to describe theta lifts of tempered representations in terms of the local Langlands correspondence. (See \cite{AG} for more details.)

Let $V$ be a $m$-dimensional quadratic space over $F$ with quadratic character $\chi_V$. Let $\pi$ be a tempered representation of $\Oo(V)$ with enhanced $L$-parameter $(\phi,\eta)$ and $\det\phi=\chi_V$. We can decompose 
\[\phi=m_1\phi_1\oplus\cdots m_r\phi_r\oplus\phi'\oplus{\phi'}^\vee \]
 where $\phi_1,\cdots,\phi_r$ are distinct irreducible orthogonal representations of $WD_F$, $m_i = m_\phi(\phi_i)$ is multiplicity of $\phi_i$ contained in $\phi$, and $\phi'$ is a sum of irreducible representations of $WD_F$ which are not orthogonal. If $\tau$ is a square-integrable representation, then $m_i=1$ for all $i$ and $\phi'=0$. Let $S_\phi$ be the component group of $\phi$.
Let $W$ be a $2n$-dimensional symplectic vector space over $F$. Define $\ell=m-2n-1$ and 
\[\kappa=\begin{cases}
1\iif \ell\mbox{ is odd};\\
2\other.
\end{cases} \]
Let $S_r$ be the irreducible algebraic representation $\mathrm{Sym}^{r-1}(\mathbb{C})$ of $\SL_2(\mathbb{C})$.
\begin{thm}
	[Atobe-Gan] Let $\pi$ be a tempered representation of $\Oo(V)$ with enhanced $L$-parameter $(\phi,\eta)$.
	\begin{enumerate}[(i)]
		\item 
	 Consider the set $\mathfrak{T}$ containing $\kappa-2$ and all integers $l>0$ with $\ell\equiv\kappa\pmod{2} $ satisfying the following conditions:
	 \begin{itemize}
	 	\item (chain condition) $\phi$ contains $S_r$ for $r=\kappa,\kappa+2,\cdots,\ell$;
	 	\item (odd-ness condition) the multiplicity $m_{\phi}(S_r)$ is odd for $r=\kappa,\kappa+2,\cdots,\ell-2$;
	 	\item (initial condition) if $\kappa=2$, then $\eta(e_2)=-1$ if $m$ is odd;
	 	\item (alternationg condition) $\eta(e_r)=\eta(e_{r+2})$ for $r=\kappa,\kappa+2,\cdots,\ell-2$.
	 \end{itemize}
	Here, $e_r$ is the element in $S_\phi$ corresponding to $S_r$. Let
$	\ell(\pi) = \max \mathfrak{T} .$ Then
	\[m^{\mbox{down}}(\pi)=m-1-\ell(\pi)\quad\mbox{  and  }\quad m^{\mbox{up}}(\pi)=m+1+\ell(\pi).  \]
	
	\item Assume that the theta lift $\theta(\pi)$ to $\Sp(W)$ or $\Mp(W)$ (depends on $n$) is nonzero and has an enhanced $L$-parameter $(\phi_{\theta(\pi)},\eta_{\theta(\pi)} )$. Put $m_1=m^{\mbox{down}}(\pi)+1-\kappa$. If $m^{\mbox{down}}(\pi)\leq \dim W_n< m_1$, then
	\[\phi_{\theta(\pi)}=(\phi\otimes\chi_V)-\chi_V S_\ell,  \]
	where $\ell=m-2n-1$. If $\dim W_n=m_1$, then
	\[\phi_{\theta(\pi)}=\begin{cases}
	\phi\otimes\chi_V\oplus\chi_V\iif \kappa=1;\\
	\phi\otimes\chi_V\iif \kappa=2.
	\end{cases} \]
	If $\dim W_n> m_1$, then $\phi_{\theta(\pi)}$ is not bounded.
	\item If $\dim W_n >m^{\mbox{up}}(\pi)$, then $\theta(\pi)$ is nonzero and $\phi_{\theta(\pi)}$ is not bounded. Therefore, $\theta(\pi)$ is not tempered.
\end{enumerate}
\end{thm}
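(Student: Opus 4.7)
The plan is to follow the inductive strategy of M\oe glin as refined by Atobe--Gan, combining three ingredients: M\oe glin's explicit formula for the theta lift of a square-integrable representation in terms of ``derivatives'' taken along Jacquet modules of $\GL$-type parabolics, the Kudla filtration of the degenerate principal series together with the Kudla--Rallis / Sun--Zhu conservation relation, and the local Langlands correspondence for $\Oo(V)$, $\Sp(W)$, and $\Mp(W)$ due to Arthur, Mok, and Gan--Ichino.

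First I would reduce everything to the case where $\pi$ is square-integrable. Since theta lifting commutes with parabolic induction from a Levi of the form $\prod_i \GL_{n_i}(F) \times \Oo(V_0)$ (Kudla's second theorem on the tensor product of theta lifts), both first-occurrence indices and the Langlands parameter of $\theta(\pi)$ can be read off from the discrete-series factor $\pi_0$ on $\Oo(V_0)$. For such a $\pi_0$ all summands in $\phi_{\pi_0}$ appear with multiplicity one, so the four conditions defining $\mathfrak{T}$ become cleanly interpretable conditions on $\phi_{\pi_0}$ and $\eta_{\pi_0}$. The heart of Part (i) is then M\oe glin's theorem: the small theta lift of $\pi_0$ at its first occurrence is obtained from $\phi_{\pi_0} \otimes \chi_V$ by deleting exactly one $\chi_V S_\ell$ summand, and this process iterates down the symplectic Witt tower, with each derivative preserving square-integrability precisely when the chain, oddness, initial and alternating conditions hold. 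Taking the maximal $\ell \in \mathfrak{T}$ gives the nonvanishing half of $m^{\mbox{down}}(\pi) = m-1-\ell(\pi)$; for the vanishing half, one shows by a Jacquet-module argument using the LLC that if any of the four conditions fails at some $\ell' < \ell$, the required irreducible summand is absent from the relevant subquotient of the induced representation. The formula $m^{\mbox{up}}(\pi) = m+1+\ell(\pi)$ then follows from the conservation relation applied to $\pi$.

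For Part (ii), M\oe glin's formula computes the parameter at first occurrence directly: $\phi_{\theta(\pi)} = (\phi \otimes \chi_V) - \chi_V S_\ell$ as virtual representations. Ascending from $m^{\mbox{down}}(\pi)$ to $m_1 = m^{\mbox{down}}(\pi) + 1 - \kappa$, I would apply the Kudla filtration of the degenerate principal series $I(s)$ of the appropriate symplectic or metaplectic group (as used in the proof of Theorem \ref{PrasadSpO}), extract the open-orbit submodule, and read off its Jacquet module along the Siegel parabolic; combined with the LLC for $\Sp$ or $\Mp$, this shows that at $\dim W_n = m_1$ the parameter becomes $\phi \otimes \chi_V \oplus \chi_V$ when $\kappa=1$ and $\phi \otimes \chi_V$ when $\kappa=2$. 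For Part (iii), iterating one more step past $m_1$ injects a character $\chi_V |\cdot|^{s}$ with $s>0$ into $\phi_{\theta(\pi)}$, so the parameter is unbounded and the lift is non-tempered, while nonvanishing is automatic from the tower property.

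The main obstacle is the careful tracking of the enhanced component-group character $\eta$ through M\oe glin's derivatives: the alternating condition $\eta(e_r) = \eta(e_{r+2})$ and the initial condition $\eta(e_2) = -1$ (when $\kappa = 2$ and $m$ is odd) are not formal and require both Arthur's multiplicity formula and the precise normalization of component-group characters under theta correspondence established by Gan--Ichino. Once this bookkeeping is settled, the remaining steps reduce to standard Kudla--Rallis formalism combined with the LLC.
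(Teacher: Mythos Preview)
The paper does not prove this theorem at all. It appears in an appendix titled ``The local theta correspondence of tempered representations and Langlands parameters,'' where it is simply \emph{stated} as a result of Atobe--Gan with the attribution ``[Atobe--Gan]'' and a reference to \cite{AG}; the paper then remarks that ``The results of Atobe--Gan in \cite{AG} are very general'' and that certain relations between $\eta$ and $\eta_{\theta(\pi)}$ are omitted. No argument is given or even sketched.

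So there is nothing in the paper to compare your proposal against. What you have written is a reasonable high-level outline of the strategy actually carried out in the Atobe--Gan paper itself: reduction to the discrete series via compatibility of theta lifting with parabolic induction, M\oe glin's explicit description of first-occurrence lifts, the conservation relation, and the local Langlands correspondence for the relevant groups. If your intent was to supply the missing proof, you should be aware that the present paper treats this as a black box and simply cites \cite{AG}; a full proof along the lines you sketch is substantial and is the content of that reference, not of this paper.
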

The results of Atobe-Gan in \cite{AG} are very general. There are relations between the characters $\eta$ and $\eta_{\theta(\pi)}$ which are omitted here due to the purpose that this paper focuses on the relations between Langlands parameters $\phi$ and $\phi_{\theta(\pi)}$. There are analogous theorems if we switch $\Sp(W)$ and $\Oo(V)$.


\bibliographystyle{amsalpha}             
\bibliographystyle{plain}
\bibliography{Sp(4R)}
\end{document}